\newtheorem{theorem}{Theorem}
\newtheorem{conjecture}[theorem]{Conjecture}
\newtheorem{lemma}[theorem]{Lemma}
\theoremstyle{definition} \newtheorem{definition}[theorem]{Definition}}
\theoremstyle{definition} }
\theoremstyle{definition} }
\theoremstyle{definition} }
\theoremstyle{remark} \newtheorem{remark}[theorem]{Remark}}
\title[A generalization of the Kreweras triangle]{A generalization of the Kreweras triangle through the universal $\text{sl}_2$ weight system}
\author{Ange Bigeni}
\thanks{National Research University Higher School of Economics, Faculty of Mathematics, Usacheva
str. 6, 119048, Moscow, Russia. \href{mailto:abigeni@hse.ru}{abigeni@hse.ru}}
\begin{document}

\selectlanguage{english}

\begin{abstract}
In the theory of finite order knot invariants, the universal $\text{sl}_2$ weight system maps the chord diagrams to polynomials in a single variable with integer coefficients. In this paper, we define a family of polynomials that generalize the Kreweras triangle (known to refine the normalized median Genocchi numbers), and we show how it appears in this weight system.
\end{abstract}

\maketitle

\section*{Notations} For all pair of integers $n < m$, the set $\{n,n+1,\hdots,m\}$ is denoted by $[n,m]$, and the set $[1,n]$ by $[n]$. The set of the permutations of $[n]$ is denoted by $\mathfrak{S}_n$.

\section{Introduction}

\subsection{About the chord diagrams and the universal $\text{sl}_2$ weight system}

Let $n$ be a positive integer. In the theory of finite order knot invariants (see \cite{CDM,LZ}), a chord diagram of order $n$, or $n$-chord diagram, is an oriented circle with $2n$ distinct points paired into $n$ disjoint pairs named chords, considered up to orientation-preserving diffeomorphisms of the circle. It can be assimilated into a tuple $((p_i,p_i^*) : i \in [n])$ such that $\{p_1,p_1^*,p_2,p_2^*,\hdots\}~=~[2n]$ with $p_1 < p_2 < \hdots < p_n$ and $p_i < p_i^*$ for all $i$ : for such a tuple, the corresponding chord diagram is obtained by labelling $2n$ points on a circle with the consecutive labels $1,2,\hdots,2n$ (following the counterclockwise direction), and pairing the points labelled with $p_i$ and $p_i^*$ for all $i \in [n]$. For example, the tuples $((1,3),(2,5),(4,6))$ and $((1,5),(2,4),(3,6))$ are two representations of the $3$-chord diagram depicted in Figure~\ref{fig:examplechorddiagram}.
\begin{figure}[h]
\begin{center}

\begin{tikzpicture}[scale=0.6]
 
\begin{scope}[rotate=0]

\draw (0:1) arc (0:360:1);

\fill (0:1) circle(0.05);
\fill (60:1) circle(0.05);
\fill (120:1) circle(0.05);
\fill (180:1) circle(0.05);
\fill (240:1) circle(0.05);
\fill (300:1) circle(0.05);

\draw (0:1) to (180:1);
\draw (60:1) to[bend right] (300:1);
\draw (120:1) to[bend left] (240:1);

\draw (0:1.3) node[scale=0.7] {$5$};
\draw (60:1.3) node[scale=0.7] {$6$};
\draw (120:1.3) node[scale=0.7] {$1$};
\draw (180:1.3) node[scale=0.7] {$2$};
\draw (240:1.3) node[scale=0.7] {$3$};
\draw (300:1.3) node[scale=0.7] {$4$};

\end{scope} 

\begin{scope}[xshift=2cm]
\draw (0,0) node[scale=1] {$=$};
\end{scope}

\begin{scope}[xshift = 4cm, rotate=90]
\draw (0:1) arc (0:360:1);

\fill (0:1) circle(0.05);
\fill (60:1) circle(0.05);
\fill (120:1) circle(0.05);
\fill (180:1) circle(0.05);
\fill (240:1) circle(0.05);
\fill (300:1) circle(0.05);

\draw (0:1) to (180:1);
\draw (60:1) to[bend right] (300:1);
\draw (120:1) to[bend left] (240:1);

\draw (0:1.3) node[scale=0.7] {$6$};
\draw (60:1.3) node[scale=0.7] {$1$};
\draw (120:1.3) node[scale=0.7] {$2$};
\draw (180:1.3) node[scale=0.7] {$3$};
\draw (240:1.3) node[scale=0.7] {$4$};
\draw (300:1.3) node[scale=0.7] {$5$};

\end{scope} 

\end{tikzpicture}
\end{center}
\caption{Two distinct labellings of a $3$-chord diagram.}
\label{fig:examplechorddiagram}
\end{figure}

A \textit{weight system} is a function $f$ on the chord diagrams that satisfies the 4-term relations depicted in Figure \ref{fig:4term}. 

\begin{figure}[h]

\begin{center}

\begin{tikzpicture}[scale=0.6]

\begin{scope}
\draw (-1.4,0) node[scale=1.5] {$f($};
\draw (1.15,0) node[scale=1.5] {$)$};
 
\begin{scope}[rotate=120]

\draw (0:1) arc (0:60:1) ;
\draw (60:1) [dashed] arc (60:120:1) ;
\draw (120:1) arc (120:180:1) ;
\draw (180:1) [dashed] arc (180:240:1) ;
\draw (240:1) arc (240:300:1) ;
\draw (300:1) [dashed] arc (300:360:1) ;

\fill (30:1) circle(0.05);
\fill (140:1) circle(0.05);
\fill (160:1) circle(0.05);
\fill (270:1) circle(0.05);

\draw (30:1) to[bend left] (160:1);
\draw (140:1) to[bend left] (270:1);

 \fill[fill=gray, opacity=0.3]
    (0,0) -- (60:1) arc (60:120:1cm);
 \fill[fill=gray, opacity=0.3]
    (0,0) -- (180:1) arc (180:240:1cm);
     \fill[fill=gray, opacity=0.3]
    (0,0) -- (300:1) arc (300:360:1cm);
\end{scope} 
\end{scope}

\begin{scope}[xshift=3.9cm]
\draw (-1.8,0) node[scale=1.5] {$-f($};
\draw (2.3,0) node[scale=1.5] {$)=f($};
 
\begin{scope}[rotate=120]

\draw (0:1) arc (0:60:1) ;
\draw (60:1) [dashed] arc (60:120:1) ;
\draw (120:1) arc (120:180:1) ;
\draw (180:1) [dashed] arc (180:240:1) ;
\draw (240:1) arc (240:300:1) ;
\draw (300:1) [dashed] arc (300:360:1) ;

\fill (30:1) circle(0.05);
\fill (140:1) circle(0.05);
\fill (160:1) circle(0.05);
\fill (270:1) circle(0.05);

\draw (30:1) to[bend left] (140:1);
\draw (160:1) to[bend left] (270:1);

 \fill[fill=gray, opacity=0.3]
    (0,0) -- (60:1) arc (60:120:1cm);
 \fill[fill=gray, opacity=0.3]
    (0,0) -- (180:1) arc (180:240:1cm);
     \fill[fill=gray, opacity=0.3]
    (0,0) -- (300:1) arc (300:360:1cm);
\end{scope} 
\end{scope}

\begin{scope}[xshift=8.5cm]
\draw (2.25,0) node[scale=1.5] {$)-f($};
 
\begin{scope}[rotate=240]

\draw (0:1) arc (0:60:1) ;
\draw (60:1) [dashed] arc (60:120:1) ;
\draw (120:1) arc (120:180:1) ;
\draw (180:1) [dashed] arc (180:240:1) ;
\draw (240:1) arc (240:300:1) ;
\draw (300:1) [dashed] arc (300:360:1) ;

\fill (30:1) circle(0.05);
\fill (140:1) circle(0.05);
\fill (160:1) circle(0.05);
\fill (270:1) circle(0.05);

\draw (30:1) to[bend left] (160:1);
\draw (140:1) to[bend left] (270:1);

 \fill[fill=gray, opacity=0.3]
    (0,0) -- (60:1) arc (60:120:1cm);
 \fill[fill=gray, opacity=0.3]
    (0,0) -- (180:1) arc (180:240:1cm);
     \fill[fill=gray, opacity=0.3]
    (0,0) -- (300:1) arc (300:360:1cm);
\end{scope} 
\end{scope}

\begin{scope}[xshift=13cm]
\draw (1.1,0) node[scale=1.5] {$)$};
 
\begin{scope}[rotate=240]

\draw (0:1) arc (0:60:1) ;
\draw (60:1) [dashed] arc (60:120:1) ;
\draw (120:1) arc (120:180:1) ;
\draw (180:1) [dashed] arc (180:240:1) ;
\draw (240:1) arc (240:300:1) ;
\draw (300:1) [dashed] arc (300:360:1) ;

\fill (30:1) circle(0.05);
\fill (140:1) circle(0.05);
\fill (160:1) circle(0.05);
\fill (270:1) circle(0.05);

\draw (30:1) to[bend left] (140:1);
\draw (160:1) to[bend left] (270:1);

 \fill[fill=gray, opacity=0.3]
    (0,0) -- (60:1) arc (60:120:1cm);
 \fill[fill=gray, opacity=0.3]
    (0,0) -- (180:1) arc (180:240:1cm);
     \fill[fill=gray, opacity=0.3]
    (0,0) -- (300:1) arc (300:360:1cm);
\end{scope} 
\end{scope}

\end{tikzpicture}

\end{center}

\caption{The 4-term relations.}
\label{fig:4term}

\end{figure}

The theory provides the construction of nontrivial weight systems from semisimple Lie algebras, among which the Lie algebra $\text{sl}_2$ of the
$2 \times 2$ matrices whose trace is zero, which raises a weight system $\varphi_{\text{sl}_2}$ mapping the $n$-chord diagrams to elements of $\mathbb{Z}[x]$ with degree $n$. With precisions, it gives birth to a family of weight systems $(\varphi_{\text{sl}_2,\lambda})_{\lambda \in \mathbb{R}}$ related by the following equation for all $\lambda \in \mathbb{R}$ and for all $n$-chord diagram $\mathcal{D}$~:
\begin{equation}
\label{eq:relationsbetweenvarphi}
\lambda^n \varphi_{\text{sl}_2,\lambda}(\mathcal{D})(x/ \lambda) = \varphi_{\text{sl}_2}(\mathcal{D})(x).
\end{equation}
In the rest of this paper, we consider the weight system
$$\varphi = \varphi_{\text{sl}_2,2}.$$
The following is a combinatorial definition of $\varphi$ from Chmutov and Varchenko~\cite{CV}.

\begin{definition}
\label{defi:varphi}
Let $\mathcal{D}$ be an $n$-chord diagram. The weight $\varphi(\mathcal{D})$ is defined as $x$ if $\mathcal{D}$ is the unique $1$-chord diagram $\mathcal{D}_1 = ((1,2))$, otherwise $n \geq 2$ and $\varphi(\mathcal{D})$ is defined by the following induction formula~:
\begin{equation}
\label{eq:definitionvarphi}
\varphi(\mathcal{D}) = (x-k)\varphi(\mathcal{D}_a) + \sum_{\{i,j\} \subset I_a} \Delta_{i,j}(\mathcal{D}_a)
\end{equation}
where, if $\mathcal{D} = ((p_i,p_i^*) : i \in [n])$~:
\begin{itemize}
\item $a$ is any given chord $(p_i,p_i^*)$ of $\mathcal{D}$ (it is then a nontrivial result that this definition does not depend on the choice of $a$);
\item $\mathcal{D}_a$ is the $(n-1)$-chord diagram obtained from $\mathcal{D}$ by deleting the chord $a$;
\item $k$ is the cardinality $\#I_a$ where $I_a$ is the set of the integers $i \in [n]$ such that the point $p_i$ is located in the left half-plane defined by the support of $a$, and such that the chord $(p_i,p_i^*)$ intersects $a$;
\item for all $\{i,j\} \subset I_a$, $\Delta_{i,j}(\mathcal{D}_a) = \varphi(\mathcal{D}_{i,j}^1) - \varphi(\mathcal{D}_{i,j}^2)$ where $\mathcal{D}_{i,j}^1$ (respectively $\mathcal{D}_{i,j}^2$) is the $(n-1)$-chord diagram obtained from $\mathcal{D}_a$ by replacing the chords $(p_i,p_i^*)$ and $(p_j,p_j^*)$ by $(p_i,p_j)$ and $(p_i^*,p_j^*)$ (respectively by $(p_i,p_j^*)$ and $(p_j,p_i^*)$).
\end{itemize}
\end{definition}

\begin{remark}
\label{rem:divisibleparx}
For all $n$-chord diagram $\mathcal{D}$, it is straightforward, by induction on $n$, that $\varphi(\mathcal{D})$ is a monic polynomial with degree $n$ and integer coefficients, divisible by~$x$.
\end{remark}

For example, there are two $2$-chord diagrams :
\begin{center}
\begin{tikzpicture}[scale=0.5]
 
\begin{scope}[rotate=90]

\draw (0:1) arc (0:360:1);

\fill (0:1) circle(0.05);
\fill (90:1) circle(0.05);
\fill (180:1) circle(0.05);
\fill (270:1) circle(0.05);

\draw (0:1) to[bend left] (90:1);
\draw (180:1) to[bend left] (270:1);

\end{scope} 

\draw (2,0) node[scale=1] {and};

\begin{scope}[xshift = 4cm, rotate=90]

\draw (0:1) arc (0:360:1);

\fill (0:1) circle(0.05);
\fill (90:1) circle(0.05);
\fill (180:1) circle(0.05);
\fill (270:1) circle(0.05);

\draw (0:1) to (180:1);
\draw (90:1) to (270:1);

\end{scope} 
\draw (5.2,0) node[scale=1] {,};
\end{tikzpicture}

\end{center}
and their respective weights are $x^2$ and $(x-1)x$. To compute the weight of the $3$-chord diagram $\mathcal{D}$ depicted hereafter
\begin{center}
\begin{tikzpicture}[scale=0.5]

\begin{scope}[xshift = 4cm, rotate=90]

\draw (0:1) arc (0:360:1);

\fill (0:1) circle(0.05);
\fill (60:1) circle(0.05);
\fill (120:1) circle(0.05);
\fill (180:1) circle(0.05);
\fill (240:1) circle(0.05);
\fill (300:1) circle(0.05);

\draw (0:1) to (180:1);
\draw (60:1) to[bend right] (300:1);
\draw (120:1) to[bend left] (240:1);

\draw (0:1.3) node[scale=0.7] {$p_1$};
\draw (60:1.35) node[scale=0.7] {$p_2$};
\draw (120:1.3) node[scale=0.7] {$p_3$};
\draw (180:1.3) node[scale=0.7] {$p_1^*$};
\draw (240:1.4) node[scale=0.7] {$p_3^*$};
\draw (300:1.4) node[scale=0.7] {$p_2^*$};

\end{scope} 
\draw (5.5,0) node[scale=1] {,};
\end{tikzpicture}

\end{center}
one can consider the chord $a = (p_1,p_1^*)$ to obtain
\begin{center}
\begin{tikzpicture}[scale=0.5]

\draw (0:0) node[scale=1] {$\varphi(\mathcal{D}) = (x-2) \varphi($};

\begin{scope}[xshift = 4.3cm, rotate=90]

\draw (0:1) arc (0:360:1);

\fill (60:1) circle(0.05);
\fill (120:1) circle(0.05);

\fill (240:1) circle(0.05);
\fill (300:1) circle(0.05);

\draw (60:1) to[bend right] (300:1);
\draw (120:1) to[bend left] (240:1);

\draw (60:1.3) node[scale=0.7] {$p_2$};
\draw (120:1.3) node[scale=0.7] {$p_3$};

\draw (240:1.4) node[scale=0.7] {$p_3^*$};
\draw (300:1.4) node[scale=0.7] {$p_2^*$};

\end{scope} 
\draw (6.9,0) node[scale=1] {$) + \varphi($};

\begin{scope}[xshift = 9.4cm, rotate=90]

\draw (0:1) arc (0:360:1);

\fill (60:1) circle(0.05);
\fill (120:1) circle(0.05);

\fill (240:1) circle(0.05);
\fill (300:1) circle(0.05);

\draw (60:1) to[bend left] (120:1);
\draw (240:1) to[bend left] (300:1);

\draw (60:1.3) node[scale=0.7] {$p_2$};
\draw (120:1.3) node[scale=0.7] {$p_3$};

\draw (240:1.4) node[scale=0.7] {$p_3^*$};
\draw (300:1.4) node[scale=0.7] {$p_2^*$};

\end{scope} 

\draw (12,0) node[scale=1] {$) - \varphi($};

\begin{scope}[xshift = 14.5cm, rotate=90]

\draw (0:1) arc (0:360:1);

\fill (60:1) circle(0.05);
\fill (120:1) circle(0.05);

\fill (240:1) circle(0.05);
\fill (300:1) circle(0.05);

\draw (60:1) to (240:1);
\draw (120:1) to (300:1);

\draw (60:1.3) node[scale=0.7] {$p_2$};
\draw (120:1.3) node[scale=0.7] {$p_3$};

\draw (240:1.4) node[scale=0.7] {$p_3^*$};
\draw (300:1.4) node[scale=0.7] {$p_2^*$};

\end{scope} 

\draw (16.2,0) node[scale=1] {$)$};

\draw (6.3,-2) node[scale=1] {$= (x-2)x^2+x^2-(x-1)x = (x-1)^2x,$};
\end{tikzpicture}
\end{center}
though the choice of $a = (p_2,p_2^*)$ or $a=(p_3,p_3^*)$ gives a quicker computation~:
\begin{center}
\begin{tikzpicture}[scale=0.5]

\draw (0:0) node[scale=1] {$\varphi(\mathcal{D}) = (x-1) \varphi($};

\begin{scope}[xshift = 4.3cm, rotate=90]

\draw (0:1) arc (0:360:1);

\fill (0:1) circle(0.05);

\fill (120:1) circle(0.05);
\fill (180:1) circle(0.05);
\fill (240:1) circle(0.05);

\draw (0:1) to (180:1);

\draw (120:1) to[bend left] (240:1);

\draw (0:1.3) node[scale=0.7] {$p_1$};

\draw (120:1.3) node[scale=0.7] {$p_3$};
\draw (180:1.3) node[scale=0.7] {$p_1^*$};
\draw (240:1.4) node[scale=0.7] {$p_3^*$};

\end{scope} 
\draw (5.6,0) node[scale=1] {$)$};

\draw (1.2,-2) node[scale=1] {$= (x-1)^2x.$};
\end{tikzpicture}
\end{center}

\begin{definition}
\label{defi:circulardiagrams}
For all $n \geq 1$, let $\mathcal{D}_n$ be the $n$-chord diagram where every chord intersects all the other chords, \textit{i.e.},

\begin{center}
\begin{tikzpicture}[scale=0.7]

\draw (0:0) node[scale=1] {$\mathcal{D}_n = ((i,n+i) : i \in [n]) =$};
\draw (6.2,0) node[scale=1] {$.$};

\begin{scope}[xshift=5cm,rotate=60]

\draw (0:1) arc (0:90:1) ;
\draw (90:1) [dashed] arc (90:180:1) ;
\draw (180:1) arc (180:270:1) ;
\draw (270:1) [dashed] arc (270:360:1) ;

\fill (0:1) circle(0.05);
\draw (0:1.3) node[scale=0.7] {$p_{n-1}^*$};
\fill (30:1) circle(0.05);
\draw (30:1.3) node[scale=0.7] {$p_n^*$};
\fill (60:1) circle(0.05);
\draw (60:1.3) node[scale=0.7] {$p_1$};
\fill (90:1) circle(0.05);
\draw (90:1.3) node[scale=0.7] {$p_2$};
\fill (180:1) circle(0.05);
\draw (180:1.3) node[scale=0.7] {$p_{n-1}$};
\fill (210:1) circle(0.05);
\draw (210:1.3) node[scale=0.7] {$p_n$};
\fill (240:1) circle(0.05);
\draw (240:1.3) node[scale=0.7] {$p_1^*$};
\fill (270:1) circle(0.05);
\draw (270:1.3) node[scale=0.7] {$p_2^*$};

\draw (0:1) -- (180:1);
\draw (30:1) -- (210:1);
\draw (60:1) -- (240:1);
\draw (90:1) -- (270:1);

 \fill[fill=gray, opacity=0.3]
    (0,0) -- (90:1) arc (90:180:1cm);
 \fill[fill=gray, opacity=0.3]
    (0,0) -- (270:1) arc (270:360:1cm);
\end{scope} 
\end{tikzpicture}
\end{center}
We set
$D_n = \varphi(\mathcal{D}_n) \in \mathbb{Z}[x].$
\end{definition}

The first elements of $(D_n)_{n \geq 1}$~:
\begin{align*}
D_1 &= x,\\
D_2 &= (x-1)x \equiv -x \mod x^2 ,\\
D_3 &= (x-2)(x-1)x \equiv 2x \mod x^2  ,\\
D_4 &= (x-3)(x-2)(x-1)x + x^3 - (x-1)^2x \equiv -7x \mod x^2  .
\end{align*}

The following is a conjecture from Lando that we will prove later (as a consequence of Theorem \ref{theo:mastertheo}).

\begin{conjecture}[Lando,2016]
\label{conjecture}
For all $n \geq 1$,
$$D_n \equiv (-1)^{n-1} h_{n-1} x \mod x^2  $$
where $(h_n)_{n \geq 0} = (1, 1, 2, 7, 38, 295, \hdots)$ is the sequence of the normalized median Genocchi numbers~\cite{h}, of which we give a reminder hereafter.
\end{conjecture}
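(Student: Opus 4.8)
\emph{Plan.} Since every weight $\varphi(\mathcal{D})$ is divisible by $x$ (Remark~\ref{rem:divisibleparx}), the residue $D_n \bmod x^2$ is carried by the single integer $L(\mathcal{D}_n) := [x^1]\varphi(\mathcal{D}_n)$, and the conjecture is exactly the assertion that $L(\mathcal{D}_n) = (-1)^{n-1}h_{n-1}$. The first thing I would do is extract the linear coefficient from the defining recursion \eqref{eq:definitionvarphi}. Writing $\varphi(\mathcal{D}) = L(\mathcal{D})\,x + O(x^2)$ and using that each of $\varphi(\mathcal{D}_a),\varphi(\mathcal{D}^1_{i,j}),\varphi(\mathcal{D}^2_{i,j})$ has zero constant term, the factor $(x-k)$ contributes only through $-k$ at the linear level, which gives the reduced recursion
\begin{equation*}
L(\mathcal{D}) = -k\,L(\mathcal{D}_a) + \sum_{\{i,j\} \subset I_a}\bigl(L(\mathcal{D}^1_{i,j}) - L(\mathcal{D}^2_{i,j})\bigr),\qquad L(\mathcal{D}_1)=1.
\end{equation*}
Applied to $\mathcal{D}_n$ with the choice $a=(1,n+1)$, one has $\mathcal{D}_a=\mathcal{D}_{n-1}$, $I_a=\{2,\dots,n\}$ and $k=n-1$, so the leading term already produces $-(n-1)L(\mathcal{D}_{n-1})$; pulling out the sign $(-1)^{n-1}$ this matches the factor $(n-1)h_{n-2}$ one expects from a Genocchi-type recurrence, and the whole problem reduces to identifying the correction sum.

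The hard part will be precisely that correction sum. The diagrams $\mathcal{D}^1_{i,j}$ and $\mathcal{D}^2_{i,j}$ are \emph{not} complete-crossing diagrams: inserting $(i,j),(n+i,n+j)$ creates short chords, while the swap $(i,n+j),(j,n+i)$ creates nested chords. Hence the family $(\mathcal{D}_n)_n$ is not stable under the recursion, and the one-variable recurrence for $L(\mathcal{D}_n)$ cannot close on itself. My plan is therefore to prove a refined (``master'') statement rather than the bare conjecture: I would isolate the combinatorial type of the diagrams generated by iterating the recursion from $\mathcal{D}_n$, measured by how many chords are nested with respect to the deleted chord, and attach to each type an integer invariant, thereby lifting $L$ to a two-parameter array. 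The claim to establish is that this array obeys exactly the recurrence of the generalized Kreweras triangle defined in the paper, with the complete-crossing diagrams $\mathcal{D}_n$ occupying a distinguished border of the array.

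The final step is then formal bookkeeping: the generalized Kreweras triangle specializes to the classical Kreweras triangle, whose $(n-1)$-st row sums to the normalized median Genocchi number $h_{n-1}$, so summing the master recurrence along the relevant diagonal and reinstating the sign accumulated from the successive $-k$ factors yields $L(\mathcal{D}_n)=(-1)^{n-1}h_{n-1}$. I expect the genuine obstacle to sit entirely in the middle paragraph: choosing the statistics on the intermediate diagrams so that the two local moves $\mathcal{D}\mapsto\mathcal{D}^1_{i,j}$ and $\mathcal{D}\mapsto\mathcal{D}^2_{i,j}$ act on them in a way that reproduces the Kreweras recurrence term by term. In particular one must control the cancellation between $L(\mathcal{D}^1_{i,j})$ and $L(\mathcal{D}^2_{i,j})$, which — as the $n=3$ computation already shows, where $\Delta_{2,3}$ collapses to $0$ while for $n=4$ the analogous sum is $-1$ — involves delicate collapses that have to be accounted for uniformly in $n$.
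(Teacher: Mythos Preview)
Your plan has the right architecture and indeed matches what the paper does: prove a two–parameter ``master'' statement from which $D_n\equiv(-1)^{n-1}h_{n-1}x\pmod{x^2}$ drops out at a boundary. But what you have written is a strategy, not a proof, and the missing piece is exactly the one you flag yourself: you never say \emph{which} intermediate diagrams form your array, nor verify any recursion for them. In the paper this is not a single family indexed by one extra parameter; two coupled families $\mathcal{A}_{n,k}$ and $\mathcal{B}_{n,k}$ are needed (roughly, $\mathcal{D}_{n-1}$ with one short chord crossing $k$ of the others, and $\mathcal{D}_{n-2}$ with two such short chords), and the master theorem is a pair of interlocked identities $A_{n,k-1}-A_{n,k}=K_{n-1,k}$ and $B_{n,k-1}-B_{n,n-k-1}=K_{n,k}-K_{n,k+1}$. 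The induction shuttles between the two via a chain of lemmas (complementarity of $A_{n,k-1}+A_{n,n-k}$, an expansion of $A_{n+1,k}$ in terms of the $B_{n,\bullet}$, and a symmetry/cancellation argument for the $\Delta_{i,j}$ terms), none of which is hinted at in your sketch. The specialisation $D_n=K_{n,1}$ is the boundary case $k=0$.

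A second, more subtle point: you propose to work only with the linear coefficient $L(\mathcal{D})$. The paper deliberately does not do this; it proves the full polynomial identity $D_n=K_{n,1}$ and reads off the $x$–coefficient at the very end. Several steps of the induction rely on genuinely polynomial information — for instance the boundary convention $K_{n,0}=-xK_{n,1}$, the identity $A_{n,0}=xD_{n-1}$, and the factor of $x$ in Formula~(\ref{eq:inductionK}) — and passing to $L$ would collapse these (e.g.\ $L(\mathcal{A}_{n,0})=0$). It is not clear that the argument can be run at the level of $L$ alone; at minimum you would need to carry the $x^2$–coefficient as well. So even granting your outline, the reduction to linear coefficients is a choice that the paper avoids for good reason.
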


\subsection{About the Genocchi numbers}
The Seidel triangle~\cite{DV} is a family of positive integers $(g_{i, j})_{1 \leq j \leq \lceil i/2 \rceil}$ (see Figure \ref{fig:seideltriangle}) defined by $g_{1,1} =~1$ and 
\begin{align*}
g_{2p, j} &= g_{2p-1, j} + g_{2p, j+1}, \\
g_{2p+1, j} &= g_{2p+1, j-1} + g_{2p, j}
\end{align*}
for all $p \geq 1$, where $g_{2p, p+1} = g_{2p+1,0} = 0$.
\begin{figure}[!h]
\begin{center}
\begin{tabular}{c|cccccccccc}
$\vdots$ & & & & & & & & & & $\iddots$\\
5 & & & & & & & & & 155 & $\hdots$\\
4 & & & & & & & 17 & 17 & 155 & $\hdots$\\
3 &   &   &   &   & 3  & 3 & 17  & 34  & 138 & $\hdots$\\
2 &   &   & 1 & 1 & 3 & 6 & 14 & 48 & 104 & $\hdots$\\
1 & 1 & 1 & 1 & 2 & 2 & 8 & 8 & 56 & 56 & $\hdots$ \\
\hline
$j/i$ & 1 & 2 & 3 & 4 & 5 & 6 & 7 & 8 & 9 & $\hdots$ 
\end{tabular}
\end{center}
\caption{The Seidel triangle.}
\label{fig:seideltriangle}
\end{figure}

The Genocchi numbers $(G_{2n})_{n \geq 1} = (1,1,3,17,155,2073,\hdots)$~\cite{G} and the median Genocchi numbers $(H_{2n+1})_{n \geq 0} = (1,2,8,56,608,\hdots)$~\cite{H} can be defined as the positive integers $G_{2n} = g_{2n-1,n}$ and $H_{2n+1} = g_{2n+2,1}$~\cite{DV}. It is well known that $H_{2n+1}$ is divisible by $2^n$ for all $n \geq 0$~\cite{BD}. The normalized median Genocchi numbers $(h_n)_{n \geq 0} = (1, 1, 2, 7, 38, 295, \hdots)$ are the positive integers defined by 
$$h_n = H_{2n+1}/2^n.$$

\begin{remark}
In view of Formula (\ref{eq:relationsbetweenvarphi}) with $\lambda = 2$, Conjecture \ref{conjecture} is equivalent to
$$\varphi_{\text{sl}_2}(\mathcal{D}_n) \equiv (-1)^{n-1} H_{2n-1}x \mod x^2$$
for all $n \geq 1$.
\end{remark}

There exist many combinatorial models of the different kinds of Genocchi numbers. Here, for all $n \geq 0$, we consider~:
\begin{itemize}
\item the set $PD2_n$ of the Dumont permutations of the second kind, that is, the permutations $\sigma \in \mathfrak{S}_{2n+2}$ such that $\sigma(2i-1) > 2i-1$ and $\sigma(2i) < 2i$ for all $i \in [n+1]$;
\item the subset $PD2N_n \subset PD2_n$ of the normalized such permutations, defined as the $\sigma \in PD2_n$ such that $\sigma^{-1}(2i) < \sigma^{-1}(2i+1)$ for all $i \in [n]$.
\end{itemize}
It is known that $H_{2n+1} = \# PD2_n$~\cite{Dumont} and $h_n = \# PD2N_n$~\cite{Kreweras,Feigin}.

Kreweras~\cite{Kreweras} refined the integers $h_n$ through the Kreweras triangle \linebreak $(h_{n,k})_{n \geq 1,k \in [n]}$ (see Figure \ref{fig:krewerastriangle}) defined by $h_{1,1} = 1$ and, for all $n \geq 2$ and $k \in [3,n]$, 
\begin{align}
\label{eq:definitionhnk}
h_{n,1} &= h_{n-1,1} + h_{n-1,2} + \hdots + h_{n-1,n-1}, \nonumber \\
h_{n,2} &= 2 h_{n,1} - h_{n-1,1},\\
h_{n,k} &= 2h_{n,k-1} - h_{n,k-2} - h_{n-1,k-1} - h_{n-1,k-2}.\nonumber
\end{align}
 
\begin{figure}[!h]
$$\begin{tabular}{ccccccccccccc}
& & & & & & 1 & & & & & \\
 && & & & 1 & & 1 & & & &  \\
 && & & 2 & & 3 & & 2 & & & \\
 & && 7 & & 12 & & 12 & & 7  \\
& & 38 & & 69 & & 81 & & 69 & & 38\\
& 295 & & 552 & & 702 & & 702 & & 552 & & 295
\end{tabular}$$
\caption{The first six lines of the Kreweras triangle.}
\label{fig:krewerastriangle}
\end{figure}

It is easy to see that for all $n \geq 0$, the set $PD2N_n$ has the partition $\{PD2N_{n,k}\}_{k \in [n]}$ where $PD2N_{n,k}$ is the set of the $\sigma \in PD2N_n$ such that $\sigma(1) = 2k$. Kreweras and Barraud~\cite{KB} proved that for all $n \geq~1$ and $k \in [n]$, the integer $h_{n,k}$ is the cardinality of $PD2N_{n,k}$. In particular, for all $n \geq 1$,
\begin{equation}
\label{eq:hn1}
h_{n,1} = \sum_{i=1}^{n-1} h_{n-1,i} = h_{n-1}.
\end{equation}

A visible property of the Kreweras triangle is the symmetry 
\begin{equation}
\label{eq:symetriehnk}
h_{n,k} = h_{n,n-k+1}
\end{equation}
for all $n \geq 1$ and $k \in [n]$. We can prove it combinatorially \cite{KB,Bigeni}, or by induction through the following easy formula
\begin{equation}
\label{eq:inductionhnk}
h_{n,k} - h_{n,k-1} = \sum_{i = k}^{n-1} h_{n-1,i} - \sum_{i=1}^{k-2} h_{n-1,i}
\end{equation}
for all $k \in [n]$ (where $h_{n,0}$ is defined as $0$), derived from Formulas (\ref{eq:definitionhnk}).

We now define a polynomial version of the Kreweras triangle, in the sense that it follows induction formulas analogous to Formulas (\ref{eq:definitionhnk}), which gives it anologous properties.

\subsection{A generalized Kreweras triangle}

Let $(K_{n,k})_{1 \leq k \leq n}$ be the family of polynomials defined by $K_{1,1} = x$ and, for all $n \geq 2$ and $k \in [3,n]$,
\begin{align}
\label{eq:definitionK}
K_{n,1} &= (x-1)K_{n-1,1} - K_{n-1,2} - \hdots - K_{n-1,n-1}, \nonumber \\
K_{n,2} &= 2 K_{n,1} + K_{n-1,1} - x^2 K_{n-2,1} \text{ (with $K_{0,1}$ defined as $1$),}\\
K_{n,k} &= 2K_{n,k-1} - K_{n,k-2} + K_{n-1,k-1} + K_{n-1,k-2} + 2xK_{n-2,k-2}.\nonumber
\end{align}

We depict in Figure \ref{fig:generalizedkrewerastriangle} the first lines of this triangle.

\begin{figure}[!h]

\begin{center}

\begin{tikzpicture}[scale=0.8]

\draw (-0.25,-0.25) rectangle (0.25,0.25);
\draw (-2.75,-1.25) rectangle (-1.25,-0.75);
\draw (1.25,-1.25) rectangle (2.75,-0.75);
\draw (-5.25,-2.25) rectangle (-2.75,-1.75);
\draw (-1.25,-2.25) rectangle (1.25,-1.75);
\draw (2.75,-2.25) rectangle (5.25,-1.75);
\draw (-7.8,-3.25) rectangle (-4.1,-2.75);
\draw (-4.1,-3.25) rectangle (-0,-2.75);
\draw (0,-3.25) rectangle (4.1,-2.75);
\draw (4.1,-3.25) rectangle (7.8,-2.75);

\draw (0,0) node[scale=0.75] {$x$};
\draw (-2,-1) node[scale=0.75] {$x^2-x$};
\draw (+2,-1) node[scale=0.75] {$x^2-x$};
\draw (-4,-2) node[scale=0.75] {$x^3-3x^2+2x$};
\draw (0,-2) node[scale=0.75] {$x^3-5x^2+3x$};
\draw (4,-2) node[scale=0.75] {$x^3-3x^2+2x$};
\draw (-5.95,-3) node[scale=0.75] {$x^4-6x^3+13x^2-7x$};
\draw (-2.05,-3) node[scale=0.75] {$x^4-10x^3+23x^2-12x$};
\draw (+2.05,-3) node[scale=0.75] {$x^4-10x^3+23x^2-12x$};
\draw (+5.95,-3) node[scale=0.75] {$x^4-6x^3+13x^2-7x$};
\end{tikzpicture}

\end{center}

\caption{The first four lines of $(K_{n,k})_{1 \leq k \leq n}$.}
\label{fig:generalizedkrewerastriangle}
\end{figure}

In view of Formulas (\ref{eq:definitionhnk}), it is easy to see that for all $n \geq 1$ and $k \in [n]$, the polynomial $K_{n,k}$ is of the kind 
\begin{equation}
\label{eq:formedeK}
K_{n,k} = x^n + \left( \sum_{i=1}^{n-2} (-1)^i a_{n,k,i} x^{n-i} \right) + (-1)^{n-1} h_{n,k} x
\end{equation}
for some positive integers $a_{n,k,1},\hdots,a_{n,k,n-2}$. The induction formulas of $(a_{n,k,1})_{1 \leq k \leq n}$ implied by Formulas (\ref{eq:definitionK}) can also be solved easily and give

\begin{equation}
\label{eq:an1}
a_{n,k,1} = \binom{n}{2} + 2(k-1)(n-k)
\end{equation}
for all $n \geq 1$ and $k \in [n]$. We can also note that
\begin{equation}
\label{eq:inductiona21}
a_{n,1,2} - a_{n-1,1,2} = \sum_{j = i}^{n-1} a_{n-1,i,1} = (n-1) \binom{n-1}{2} + 2 \binom{n-1}{3}
\end{equation}
in view of Formula (\ref{eq:an1}), which implies

\begin{equation*}
\label{eq:an21}
a_{n,1,2} = (n-2)(n-1)n(5n-7)/24
\end{equation*}
for all $n \geq 1$. This integer $a_{n,1,2}$ can be interpreted as the number of uplets $(w,x,z,t) \in [n-1]^4$ such that $w > x < y \geq z$. We show it by induction on~$n$. There are indeed $a_{1,1,2} = 0$ such uplets that belong to $[0]^4 = \emptyset$. Suppose now there are $a_{n-1,1,2}$ such uplets in $[n-2]^4$ for some $n \geq 2$, and let $(w,x,y,t)$ be such an uplet that belongs to $[n-1]^4 \backslash [n-2]^4$. There are three cases :
\begin{itemize}
\item either $(w,x,y,t) = (n-1,x,y,x)$ or $(n-1,x,y,y)$ for some $x < y$ (there are $2\binom{n-1}{2}$ such uplets);
\item or $(w,x,y,t) = (n-1,x,y,z)$ for some $x \neq z < y$ (there are $2\binom{n-1}{3}$ such uplets);
\item or $(w,x,y,t) = (w,x,n-1,z)$ for some $x < w \leq n-2$ and $z \in [n-1]$ (there are $(n-1)\binom{n-2}{2}$ such uplets),
\end{itemize}
for a total of $(n-1) \binom{n-1}{2}+2\binom{n-1}{3}$ elements, which gives the induction formula~(\ref{eq:inductiona21}).

The combinatorial interpretation of the coefficient $a_{n,k,i}$ in general (which would bridge the gap between $a_{n,k,1}$ and $h_{n,k}$) is an open problem.

The symmetry of the Kreweras triangle stated by Formula~(\ref{eq:symetriehnk}) has the extension
\begin{equation}
\label{eq:symetrieK}
K_{n,n+1-k} = K_{n,k},
\end{equation}
which can be proved by induction through the following formula which extends Formula (\ref{eq:inductionhnk}) and which we easily derive from Formulas (\ref{eq:definitionK})~: 
\begin{equation}
\label{eq:inductionK}
\begin{split}
K_{n,j}-K_{n,j-1} =& \sum_{i=1}^{j-2} K_{n-1,i} - \sum_{i=j}^{n-1} K_{n-1,i}\\
& + x \left( \sum_{i=1}^{j-2} K_{n-2,i} - \sum_{i=j-1}^{n-2} K_{n-2,i} \right)
\end{split}
\end{equation}
for all $j \in [n]$ (where $K_{n,0}$ is defined as $-xK_{n,1}$ for all $n \geq 0$; recall that $K_{0,1}$ has been defined as $1$).

\subsection{How the polynomial Kreweras triangle appears in the universal $\text{sl}_2$ weight system}

\begin{definition}
Let $n \geq 1$ and $k \in [0,n-1]$. We define two $n$-chords diagrams $\mathcal{A}_{n,k}$ and $\mathcal{B}_{n,k}$ as follows.

\begin{center}

\begin{tikzpicture}[scale=1.5]

\draw (-1.7,0) node[scale=1] {$\mathcal{A}_{n,k} = $};
\draw (2,0) node[scale=1] {$\mathcal{B}_{n,k} = $};

\begin{scope}[xshift= 3.7cm,rotate=110]

\draw (0:1) arc (0:20:1) ;
\draw (20:1) arc (20:40:1) ;
\draw (40:1) [dashed] arc (40:100:1) ;
\draw (100:1) arc (100:120:1) ;
\draw (120:1) arc (120:140:1) ;
\draw (140:1) [dashed] arc (140:180:1) ;
\draw (180:1) arc (180:220:1) ;
\draw (220:1) [dashed] arc (220:280:1) ;
\draw (280:1) arc (280:320:1) ;
\draw (320:1) [dashed] arc (320:360:1) ;

\fill (0:1) circle(0.03);
\draw (0:1.2) node[scale=0.8] {$p_{n-1}^*$};
\fill (20:1) circle(0.03);
\draw (20:1.2) node[scale=0.8] {$p_1$};
\fill (40:1) circle(0.03);
\draw (40:1.2) node[scale=0.8] {$p_2$};
\fill (100:1) circle(0.03);
\draw (100:1.2) node[scale=0.8] {$p_{k+1}$};
\fill (120:1) circle(0.03);
\draw (120:1.2) node[scale=0.8] {$p_1^*$};
\fill (140:1) circle(0.03);
\draw (140:1.2) node[scale=0.8] {$p_{k+2}$};
\fill (180:1) circle(0.03);
\draw (180:1.2) node[scale=0.8] {$p_{n-1}$};
\fill (220:1) circle(0.03);
\draw (220:1.2) node[scale=0.8] {$p_2^*$};
\fill (280:1) circle(0.03);
\draw (280:1.2) node[scale=0.8] {$p_{k+1}^*$};
\fill (320:1) circle(0.03);
\draw (320:1.2) node[scale=0.8] {$p_{k+2}^*$};
\fill (200:1) circle(0.03);
\draw (200:1.2) node[scale=0.8] {$p_{n}$};
\fill (300:1) circle(0.03);
\draw (300:1.2) node[scale=0.8] {$p_{n}^*$};

\draw (0:1) -- (180:1);
\draw (20:1) to[bend left] (120:1);
\draw (40:1) -- (220:1);
\draw (100:1) -- (280:1);
\draw (140:1) -- (320:1);
\draw (200:1) to[bend left] (300:1);

 \fill[fill=gray, opacity=0.3]
    (0,0) -- (40:1) arc (40:100:1cm);
 \fill[fill=gray, opacity=0.3]
    (0,0) -- (140:1) arc (140:180:1cm);
     \fill[fill=gray, opacity=0.3]
    (0,0) -- (220:1) arc (220:280:1cm);
     \fill[fill=gray, opacity=0.3]
    (0,0) -- (320:1) arc (320:360:1cm);
\end{scope} 

\begin{scope}[rotate=110]

\draw (0:1) arc (0:20:1) ;
\draw (20:1) arc (20:40:1) ;
\draw (40:1) [dashed] arc (40:100:1) ;
\draw (100:1) arc (100:120:1) ;
\draw (120:1) arc (120:140:1) ;
\draw (140:1) [dashed] arc (140:180:1) ;
\draw (180:1) arc (180:220:1) ;
\draw (220:1) [dashed] arc (220:280:1) ;
\draw (280:1) arc (280:320:1) ;
\draw (320:1) [dashed] arc (320:360:1) ;

\fill (0:1) circle(0.03);
\draw (0:1.2) node[scale=0.8] {$p_{n}^*$};
\fill (20:1) circle(0.03);
\draw (20:1.2) node[scale=0.8] {$p_1$};
\fill (40:1) circle(0.03);
\draw (40:1.2) node[scale=0.8] {$p_2$};
\fill (100:1) circle(0.03);
\draw (100:1.2) node[scale=0.8] {$p_{k+1}$};
\fill (120:1) circle(0.03);
\draw (120:1.2) node[scale=0.8] {$p_1^*$};
\fill (140:1) circle(0.03);
\draw (140:1.2) node[scale=0.8] {$p_{k+2}$};
\fill (180:1) circle(0.03);
\draw (180:1.2) node[scale=0.8] {$p_{n}$};
\fill (220:1) circle(0.03);
\draw (220:1.2) node[scale=0.8] {$p_2^*$};
\fill (280:1) circle(0.03);
\draw (280:1.2) node[scale=0.8] {$p_{k+1}^*$};
\fill (320:1) circle(0.03);
\draw (320:1.2) node[scale=0.8] {$p_{k+2}^*$};

\draw (0:1) -- (180:1);
\draw (20:1) to[bend left] (120:1);
\draw (40:1) -- (220:1);
\draw (100:1) -- (280:1);
\draw (140:1) -- (320:1);

 \fill[fill=gray, opacity=0.3]
    (0,0) -- (40:1) arc (40:100:1cm);
 \fill[fill=gray, opacity=0.3]
    (0,0) -- (140:1) arc (140:180:1cm);
     \fill[fill=gray, opacity=0.3]
    (0,0) -- (220:1) arc (220:280:1cm);
     \fill[fill=gray, opacity=0.3]
    (0,0) -- (320:1) arc (320:360:1cm);
\end{scope} 

\end{tikzpicture}

\end{center}

We then define two polynomials $A_{n,k} = \varphi(\mathcal{A}_{n,k})$ and $B_{n,k} = \varphi(\mathcal{B}_{n,k})$.
Note that :
\begin{itemize}
\item the chord $(p_1,p_1^*)$ of $\mathcal{A}_{n,k}$ or $\mathcal{B}_{n,k}$ (and the chord $(p_n,p_n^*)$ of $\mathcal{B}_{n,k}$) intersects exactly $k$ chords;
\item for all $n \geq 1$, $(A_{n,0},A_{n,1}) = (x D_{n-1},(x-1)D_{n-1})$ (where $D_0$ is defined as $1$), and $(B_{n,0},B_{n,1}) = (x^2 D_{n-2},(x-1)^2 D_{n-2})$ for all $n \geq 2$;
\item $A_{n,n-1} = B_{n,n-1} = D_n$ for all $n \geq 1$.
\end{itemize}
We also set $A_{n,-1} = 0$ and $B_{n,-1}=-xD_n$.
\end{definition}

\begin{remark}
\label{rem:DeltaequalsA}
For all $1 \leq i < j \leq n$, it is straightforward that
$$\Delta_{i,j}(\mathcal{D}_n) = B_{n,j-i-1} - B_{n,n-1-(j-i)}.$$
\end{remark}

The main result of this paper is the following, which implies Conjecture \ref{conjecture}.

\begin{theorem}
\label{theo:mastertheo}
For all $n \geq 1$ and $k \in [0,n-1]$, we have
\begin{align*}
A_{n,k-1} - A_{n,k} &= K_{n-1,k},\label{eq:A}\tag{$13_{n,k}$}\\
B_{n,k-1} - B_{n,n-k-1} &= K_{n,k} - K_{n,k+1},\label{eq:B}\tag{$14_{n,k}$}
\end{align*}
(recall that $K_{0,1} = 1$ and $K_{n,0} = -xK_{n,1}$ for all $n \geq 0$).
\end{theorem}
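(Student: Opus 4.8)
The plan is to prove \eqref{eq:A} and \eqref{eq:B} together by induction on $n$, the base case $n=1$ being a direct check: using the conventions $A_{1,-1}=0$, $A_{1,0}=xD_0=x$, $B_{1,-1}=-xD_1$, $B_{1,0}=D_1$ and $K_{0,1}=1$, both identities collapse to $D_1=K_{1,1}=x$. For the inductive step I would exploit the coupling between the two families: \eqref{eq:A} at level $n$ will be deduced from \eqref{eq:B} at level $n-1$, while \eqref{eq:B} at level $n$ will be deduced from \eqref{eq:A} at levels $n$ and $n-1$ through the auxiliary formula \eqref{eq:inductionK}. All the boundary instances $k=0$ and $k=n-1$ should collapse, using $A_{n,-1}=0$, $B_{n,-1}=-xD_n$, $A_{n,n-1}=B_{n,n-1}=D_n$ and the symmetry \eqref{eq:symetrieK}, to the single identity $D_m=K_{m,1}$, which therefore has to be carried along as part of the induction.

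The engine of the proof is Definition~\ref{defi:varphi} applied to $\mathcal{A}_{n,k}$ and $\mathcal{B}_{n,k}$ by deleting the chord $a=(p_1,p_1^*)$, which by construction crosses exactly $k$ chords. This produces expansions of the shape
\begin{gather*}
A_{n,k}=(x-k)\,\varphi(\widehat{\mathcal{A}})+\sum_{\{i,j\}\subset I_a}\Delta_{i,j}(\widehat{\mathcal{A}}),\\
B_{n,k}=(x-k)\,\varphi(\widehat{\mathcal{B}})+\sum_{\{i,j\}\subset I_a}\Delta_{i,j}(\widehat{\mathcal{B}}),
\end{gather*}
where $\widehat{\mathcal{A}}$, $\widehat{\mathcal{B}}$ are the diagrams obtained by erasing $a$ and $I_a$ is the set of the $k$ chords crossing $a$ from the left half-plane. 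I would then take consecutive differences in $k$: because only the distinguished chord $a$ moves when $k$ varies, the term $(x-k)\varphi(\widehat{\cdot})$ contributes a single copy of $\varphi(\widehat{\cdot})$, and the two $\Delta$-sums differ only by the $k-1$ pairs involving the chord freshly crossed by $a$. This isolates $A_{n,k-1}-A_{n,k}$ (resp.\ the analogous $B$-difference) as $\varphi(\widehat{\cdot})$ minus a short sum of $\Delta$-terms.

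The combinatorial core — and the step I expect to be the main obstacle — is the explicit identification of the diagrams $\widehat{\mathcal{A}}$, $\widehat{\mathcal{B}}$ and of the two resolutions $\mathcal{D}^1_{i,j}$, $\mathcal{D}^2_{i,j}$ attached to each surviving crossing, as diagrams of type $\mathcal{A}$, $\mathcal{B}$ or $\mathcal{D}$ with explicitly smaller parameters. The guiding principle is that resolving a crossing of an all-crossing configuration turns it into a nested block, i.e.\ precisely into a $\mathcal{B}$-diagram; this is exactly the content of Remark~\ref{rem:DeltaequalsA}, $\Delta_{i,j}(\mathcal{D}_m)=B_{m,j-i-1}-B_{m,m-1-(j-i)}$, which I would use (together with its analogue read off from $\widehat{\mathcal{A}}$ and $\widehat{\mathcal{B}}$) to rewrite every $\Delta$-term as a difference of $B_{n-1,\cdot}$'s. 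The delicate point is the bookkeeping of the crossing numbers of the resolved chords, since the cyclic gaps $j-i$ must be translated correctly through the deletion of $a$ and the relabelling of the remaining $n-1$ chords.

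Once this dictionary is in place, the remainder is algebraic. Feeding the induction hypothesis \eqref{eq:B} at level $n-1$ converts the $B_{n-1,\cdot}$-differences into $K_{n-1,\cdot}$-differences, and I would then collapse the telescoped sum to the single polynomial $K_{n-1,k}$ by invoking the defining recurrence \eqref{eq:definitionK}: the three regimes $k=1$, $k=2$ and $k\ge 3$ of that recurrence should match the three shapes taken by the telescoped sum, while the symmetry \eqref{eq:symetrieK} absorbs the reflected indices. For \eqref{eq:B} I would instead rewrite its right-hand side $K_{n,k}-K_{n,k+1}$ through \eqref{eq:inductionK} as a combination of $K_{n-1,\cdot}$ and $x\,K_{n-2,\cdot}$, turn these into telescoping sums of $A_{n,\cdot}$ and $A_{n-1,\cdot}$ via the already-established \eqref{eq:A} at levels $n$ and $n-1$, and verify that the outcome agrees with the $B$-expansion obtained from the surgery above; this closes the coupled induction.
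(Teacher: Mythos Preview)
Your inductive scheme and your treatment of $(13_{n+1,k})$ match the paper's: deleting $a=(p_1,p_1^*)$ from $\mathcal A_{n+1,k}$ leaves $\mathcal D_n$, Remark~\ref{rem:DeltaequalsA} turns each $\Delta_{i,j}(\mathcal D_n)$ into a $B_{n,\cdot}$-difference, and the hypothesis $(14_{n,\cdot})$ finishes the job. Your reduction of the right-hand side of $(14_{n+1,k})$ via \eqref{eq:inductionK} to an $A$-identity is also what the paper does; the target identity becomes
\[
B_{n+1,k-1}-B_{n+1,n-k}=A_{n+1,k-1}-A_{n+1,n-k}+x\bigl(A_{n,k-1}-A_{n,n-k}\bigr).
\]

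The gap is in how you plan to reach this from the $\mathcal B$ side. You assume an ``analogue of Remark~\ref{rem:DeltaequalsA}'' that would let you rewrite each $\Delta_{i,j}(\widehat{\mathcal B})$ as a difference of $B_{n-1,\cdot}$'s. But $\widehat{\mathcal B}$ is a diagram of type $\mathcal A$, which still carries one short chord; resolving a crossing of two of its long chords produces a diagram with \emph{two} short chords placed in a configuration that is not of type $\mathcal A$, $\mathcal B$ or $\mathcal D$, so no such identification is available. (Your claim that ``only the distinguished chord $a$ moves when $k$ varies'' also fails for $\mathcal B_{n,k}$, where both special chords move with $k$.) The paper's route is substantially more involved than what you sketch. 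It expands $A_{n+1,l}$ and $B_{n+1,l}$ by deleting not $(p_1,p_1^*)$ but the \emph{last} chord $(p_{n+1},p_{n+1}^*)$, so that both are written over the same residual diagram $\mathcal A_{n,l}$ and their difference is a partial sum of $\Delta_{i,j}(\mathcal A_{n,l})$'s (Lemmas~\ref{lem:expands}--\ref{lem:simpl2}). Those terms are then handled not individually but in matched pairs: an axial-symmetry involution annihilates one block outright (Lemma~\ref{lem:lescontributionsnulles}), and for the surviving block a \emph{second} application of Definition~\ref{defi:varphi} to each of the four resolved diagrams, together with a separate bookkeeping expansion of $B_{n,\cdot}$ (Lemma~\ref{lem:expansionB}), is required to prove that $\Delta_{I,J}(\mathcal A_{n,n-k})+\Delta_{i,j}(\mathcal A_{n,k-1})=\Delta_{i,j}(\mathcal D_n)+x\,\Delta_{i,j}(\mathcal D_{n-1})$ for the shifted indices $(I,J)=(n-k+i,n-k+j)$ (Lemma~\ref{lem:lescontributionsnonnulles}). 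Only after this extra layer of surgery do the terms collapse to genuine $B$-differences and the comparison with the right-hand side goes through.
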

\setcounter{equation}{8}
Note that Formula ($13_{n+1,0}$) and Formula ($14_{n,0}$) are both equivalent to $D_n = K_{n,1}$ for all $n \geq 1$, which indeed proves Conjecture \ref{conjecture} in view of Formula (\ref{eq:formedeK}) and Formula (\ref{eq:hn1}).
\setcounter{equation}{14}

We prove Theorem \ref{theo:mastertheo} in Section \ref{sec:proof}.

In Section \ref{sec:open}, we discuss open problems related to it, among which a more general conjecture from Lando.

\section{Proof of Theorem \ref{theo:mastertheo}}
\label{sec:proof}
For $n = 1$ and $k = 0$, we have $A_{1,-1} - A_{1,0} = - x = K_{0,0}$ and $B_{1,-1} - B_{1,0} = -x^2 - x = K_{1,0} - K_{1,1}$, \textit{i.e.}, Theorem \ref{theo:mastertheo} is true for this case. Assume that it is true for some $n \geq 1$ and for all $k \in [0,n-1]$. In particular, Formula ($14_{n,0}$) being true implies $K_{n,0} = -xD_n$, hence Formula ($13_{n+1,0}$).

\begin{lemma}
\label{lem:complementariteA}
For all $k \in [n-1]$,
$$A_{n,k-1} + A_{n,n-k} = x D_{n-1} + D_n.$$
\end{lemma}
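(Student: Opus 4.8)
The plan is to reduce the statement to the already-available Formula $(13_{n,k})$ and to the symmetry of the polynomial Kreweras triangle, rather than to argue directly with the chord diagrams. Since the induction hypothesis grants Theorem \ref{theo:mastertheo} at level $n$, we have $A_{n,i-1}-A_{n,i}=K_{n-1,i}$ for every $i\in[1,n-1]$. Telescoping these equalities from $i=1$ up to any $j\in[0,n-1]$ and using the initial value $A_{n,0}=xD_{n-1}$ (one of the listed properties of the diagrams $\mathcal{A}_{n,k}$), I would first establish the closed form
$$A_{n,j}=xD_{n-1}-\sum_{i=1}^{j}K_{n-1,i},$$
where the empty sum at $j=0$ correctly returns $A_{n,0}=xD_{n-1}$.

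Applying this with $j=k-1$ and $j=n-k$ (both legitimate indices in $[0,n-1]$ when $k\in[n-1]$) and adding, the left-hand side of the lemma becomes $2xD_{n-1}$ minus the combined sum $\sum_{i=1}^{k-1}K_{n-1,i}+\sum_{i=1}^{n-k}K_{n-1,i}$. The key step is then to invoke the symmetry $K_{n-1,i}=K_{n-1,n-i}$, i.e. Formula (\ref{eq:symetrieK}) read at level $n-1$, which rewrites the second sum as $\sum_{i=k}^{n-1}K_{n-1,i}$; since the ranges $[1,k-1]$ and $[k,n-1]$ tile $[1,n-1]$ exactly, the combined sum collapses to $\sum_{i=1}^{n-1}K_{n-1,i}$, independently of $k$. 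This yields $A_{n,k-1}+A_{n,n-k}=2xD_{n-1}-\sum_{i=1}^{n-1}K_{n-1,i}$.

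To finish, I would evaluate the closed form at the endpoint $j=n-1$ and use the listed property $A_{n,n-1}=D_n$, obtaining $\sum_{i=1}^{n-1}K_{n-1,i}=xD_{n-1}-D_n$. Substituting this back gives
$$A_{n,k-1}+A_{n,n-k}=2xD_{n-1}-(xD_{n-1}-D_n)=xD_{n-1}+D_n,$$
which is the claim.

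I expect no genuine obstacle: the argument is pure bookkeeping once Formula $(13_{n,\cdot})$ and the $K$-symmetry are in hand. The only points requiring care are the index ranges, so that the two applications of the closed form are valid and the empty sum at $k=1$ is handled correctly, and the observation that the proof uses only the Formula $(13)$ half of the theorem at level $n$ together with the independently established symmetry (\ref{eq:symetrieK}); in particular it never appeals to the Formula $(14)$ half, so it is not circular with the statement it is meant to help prove.
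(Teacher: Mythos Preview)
Your proof is correct and uses exactly the same ingredients as the paper: Formula $(13_{n,\cdot})$ from the induction hypothesis, the symmetry $K_{n-1,i}=K_{n-1,n-i}$, and the boundary values $A_{n,0}=xD_{n-1}$, $A_{n,n-1}=D_n$. The only cosmetic difference is that the paper argues by induction on $k$, observing from $(13_{n,k})$, $(13_{n,n-k})$ and the symmetry that $A_{n,k-1}+A_{n,n-k}=A_{n,k}+A_{n,n-k-1}$, whereas you unroll this into the closed form $A_{n,j}=xD_{n-1}-\sum_{i=1}^{j}K_{n-1,i}$ and then simplify; the two arguments are the telescoped and untelescoped versions of one another.
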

\begin{proof}
We have $A_{n,0} = x D_{n-1}$ and $A_{n,n-1} =  D_n$ so the equality is true for $k = 1$. Suppose it is for some $k \in [n-2]$. In view of Formula ($13_{n,k}$), Formula ($13_{n,n-k}$) and Formula (\ref{eq:symetrieK}), we have $A_{n,k-1} + A_{n,n-k} = A_{n,k} + A_{n,n-k-1}$, so the lemma is true by induction on $k$.
\end{proof}

\begin{lemma}
\label{lem:expansionBintermsofA}
For all $k \in [0,n]$,
$$A_{n+1,k-1} - A_{n+1,k} = -xD_n  -  \sum_{i=0}^{k-1} B_{n,i-1} - B_{n,n-i-1}  .$$
\end{lemma}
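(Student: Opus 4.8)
The plan is to evaluate $A_{n+1,k}$ directly through the $\varphi$-recursion of Definition~\ref{defi:varphi}, applied to $\mathcal{A}_{n+1,k}$ along the chord $a=(p_1,p_1^*)$, and then to subtract the resulting expressions for consecutive values of $k$. Two structural observations make this effective. First, reading the points of $\mathcal{A}_{n+1,k}$ counterclockwise and deleting $p_1$ and $p_1^*$, the surviving endpoints occur in the order $p_2,\dots,p_{n+1},p_2^*,\dots,p_{n+1}^*$; hence $(\mathcal{A}_{n+1,k})_a$ is exactly the complete diagram $\mathcal{D}_n$, under the relabelling that sends the chord $(p_{i+1},p_{i+1}^*)$ to the $i$-th chord of $\mathcal{D}_n$. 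Second, the chords meeting $a$ from its left half-plane are precisely $(p_2,p_2^*),\dots,(p_{k+1},p_{k+1}^*)$, so $\#I_a=k$ and, in the labelling of $\mathcal{D}_n$, $I_a$ becomes the initial block $\{1,\dots,k\}$.

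With these identifications, Definition~\ref{defi:varphi} gives
\[
A_{n+1,k}=(x-k)D_n+\sum_{\{i,j\}\subset\{1,\dots,k\}}\Delta_{i,j}(\mathcal{D}_n),
\]
and I would dispose of the base case $k=0$ separately (both sides equal $-xD_n$, the sum being empty). For $k\ge 1$, subtracting the identity for $k$ from the one for $k-1$, the term $(x-k)D_n$ contributes $+D_n$, while the two $\Delta$-sums differ only in the pairs $\{i,k\}$ with $i\in\{1,\dots,k-1\}$ that involve the newly crossing chord. Since $\Delta_{i,j}(\mathcal{D}_n)$ depends only on the gap $j-i$ and these new pairs realize each gap $g\in\{1,\dots,k-1\}$ exactly once, Remark~\ref{rem:DeltaequalsA} yields
\[
A_{n+1,k-1}-A_{n+1,k}=D_n-\sum_{g=1}^{k-1}\bigl(B_{n,g-1}-B_{n,n-1-g}\bigr).
\]

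It then remains to reconcile this with the claimed right-hand side. Peeling off the $i=0$ summand of $\sum_{i=0}^{k-1}(B_{n,i-1}-B_{n,n-i-1})$ and using $B_{n,-1}=-xD_n$ together with $B_{n,n-1}=D_n$, that summand equals $-xD_n-D_n$; hence $-xD_n-\sum_{i=0}^{k-1}(B_{n,i-1}-B_{n,n-i-1})$ collapses to $D_n-\sum_{g=1}^{k-1}(B_{n,g-1}-B_{n,n-1-g})$, matching the displayed difference. The main obstacle is entirely geometric and lives in the first paragraph: one must verify carefully that deleting $(p_1,p_1^*)$ from $\mathcal{A}_{n+1,k}$ produces $\mathcal{D}_n$ under the stated relabelling (the \emph{same} relabelling for $k-1$ and $k$, so that the common $\Delta$-pairs cancel), that $I_a=\{2,\dots,k+1\}$, and that under this relabelling the pairwise index differences coincide with the gaps entering Remark~\ref{rem:DeltaequalsA}. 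Once this bookkeeping is pinned down, what remains is the short telescoping above.
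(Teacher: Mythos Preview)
Your proposal is correct and is essentially the paper's own proof: apply Definition~\ref{defi:varphi} to $\mathcal{A}_{n+1,l}$ with $a=(p_1,p_1^*)$ to obtain $A_{n+1,l}=(x-l)D_n+\sum \Delta_{i,j}(\mathcal{D}_n)$, subtract consecutive values, and use Remark~\ref{rem:DeltaequalsA} together with $B_{n,-1}-B_{n,n-1}=-(x+1)D_n$ to rewrite the difference. Your write-up merely spells out the geometric identification and the $i=0$ peel-off more explicitly than the paper does.
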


\begin{proof}
For all $l \in [0,n]$, from Definition \ref{defi:varphi} (with $\mathcal{D}= \mathcal{A}_{n+1,l}$ and $a = (p_1,p_1^*)$) and Remark~\ref{rem:DeltaequalsA}, we have
\begin{equation}
\label{eq:c}
A_{n+1,l} = (x-l)D_{n} + \sum_{2 \leq i < j \leq l+1} \Delta_{i,j}(\mathcal{D}_n)
\end{equation}
from which the lemma follows in view of Remark \ref{rem:DeltaequalsA} and $B_{n,-1}- B_{n,n-1} = -(x+1)D_n$.
\end{proof}

Now, Lemma \ref{lem:expansionBintermsofA}, Formula \hyperref[eq:B]{$(14_{n,k})$} for all $k \in [0,n-1]$, and the equality $-xD_n = K_{n,0}$ imply Formula \hyperref[eq:A]{$(13_{n+1,k})$} for all $k \in [0,n]$. Afterwards, from Formulas (\ref{eq:definitionK}) we have
$$K_{n+1,1} = x K_{n,1} - \sum_{k = 1}^n K_{n,k},$$
which, in view of $x K_{n,1} = x D_n = A_{n+1,0}$ and Formula ($13_{n+1,k}$) for all $k \in [n]$, becomes
$$K_{n+1,1} = A_{n+1,n} = D_{n+1}.$$
This proves Formula~\hyperref[eq:B]{$(14_{n+1,0})$}. It remains to prove Formula \hyperref[eq:B]{$(14_{n+1,k})$} for all $k \in [n]$.

\begin{definition}
For all $n$-chord diagram $\mathcal{D}$ and for all quadruplet of integers $(a,b,c,d)$ such that $1 \leq a \leq b < c \leq d \leq n$, we define two polynomials
\begin{align*}
T_{a,b}(\mathcal{D}) &= \sum_{a \leq s < t \leq b} \Delta_{s,t}(\mathcal{D}),\\
R_{a,b,c,d}(\mathcal{D}) &= \sum_{s = a}^b \sum_{t = c}^d \Delta_{s,t}(\mathcal{D}). 
\end{align*} 
They are related by the equality
\begin{equation}
\label{eq:relationTR}
T_{a,c}(\mathcal{D}) = T_{a,b}(\mathcal{D}) + T_{b+1,c}(\mathcal{D}) + R_{a,b,b+1,c}(\mathcal{D}).
\end{equation}
\end{definition}

\begin{lemma}
\label{lem:expands}
For all $l \in [0,n-1]$,
\begin{align}
B_{n+1,l} &= (x-l) A_{n,l} + T_{2,l+1}(\mathcal{A}_{n,l}) ,\label{eq:a}\\
A_{n+1,l} &= (x-(n-1)) A_{n,l} + T_{2,n}(\mathcal{A}_{n,l}) ,\label{eq:b}.
\end{align}
\end{lemma}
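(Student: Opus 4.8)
The plan is to read both identities directly off the induction formula of Definition \ref{defi:varphi}, applied to the two diagrams $\mathcal{B}_{n+1,l}$ and $\mathcal{A}_{n+1,l}$ with the \emph{same} distinguished chord $a=(p_{n+1},p_{n+1}^*)$ in each case. The point is that this chord has been placed, in both families, so that erasing it returns $\mathcal{A}_{n,l}$, while the number of chords it meets is $l$ in $\mathcal{B}_{n+1,l}$ but $n-1$ in $\mathcal{A}_{n+1,l}$; these two numbers are exactly the scalars $x-l$ and $x-(n-1)$ occurring in \eqref{eq:a} and \eqref{eq:b}.

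First I would record the crossing data of the two families, which can be read off the cyclic orders of their endpoints: in $\mathcal{A}_{m,k}$ the chords $(p_2,p_2^*),\ldots,(p_m,p_m^*)$ pairwise cross (they form a copy of $\mathcal{D}_{m-1}$) while $(p_1,p_1^*)$ meets exactly $(p_2,p_2^*),\ldots,(p_{k+1},p_{k+1}^*)$; in $\mathcal{B}_{m,k}$ the chords $(p_2,p_2^*),\ldots,(p_{m-1},p_{m-1}^*)$ form a copy of $\mathcal{D}_{m-2}$, and each of the two remaining chords $(p_1,p_1^*)$ and $(p_m,p_m^*)$ meets exactly $(p_2,p_2^*),\ldots,(p_{k+1},p_{k+1}^*)$. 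Comparing the cyclic orders then shows that deleting the chord $(p_{n+1},p_{n+1}^*)$ from either $\mathcal{B}_{n+1,l}$ or $\mathcal{A}_{n+1,l}$ yields $\mathcal{A}_{n,l}$, the surviving chords retaining their labels $p_1,\ldots,p_n$.

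For \eqref{eq:a} I would take $\mathcal{D}=\mathcal{B}_{n+1,l}$ and $a=(p_{n+1},p_{n+1}^*)$ in Definition \ref{defi:varphi}. By the crossing data above, $a$ meets exactly the $l$ chords $(p_2,p_2^*),\ldots,(p_{l+1},p_{l+1}^*)$, so $\mathcal{D}_a=\mathcal{A}_{n,l}$, the scalar is $x-l$, and the index set $I_a$ equals $\{2,\ldots,l+1\}$. The correction term $\sum_{\{i,j\}\subset I_a}\Delta_{i,j}(\mathcal{D}_a)$ is then exactly $\sum_{2\le s<t\le l+1}\Delta_{s,t}(\mathcal{A}_{n,l})=T_{2,l+1}(\mathcal{A}_{n,l})$, which gives \eqref{eq:a}. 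For \eqref{eq:b} I would keep the same chord $a=(p_{n+1},p_{n+1}^*)$ but take $\mathcal{D}=\mathcal{A}_{n+1,l}$; now $a$ lies in the complete part $\mathcal{D}_n$ carried by $(p_2,\ldots,p_{n+1})$ and, since $l\le n-1$, is not met by $(p_1,p_1^*)$, so it meets exactly the $n-1$ chords $(p_2,p_2^*),\ldots,(p_n,p_n^*)$. Hence $\mathcal{D}_a=\mathcal{A}_{n,l}$, the scalar is $x-(n-1)$, $I_a=\{2,\ldots,n\}$, and the correction term is $T_{2,n}(\mathcal{A}_{n,l})$, yielding \eqref{eq:b}.

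The step I expect to require the most care is justifying that $\#I_a$ equals the \emph{total} number of chords met by $a$, and that $I_a$ is precisely the contiguous block $\{2,\ldots,l+1\}$ (respectively $\{2,\ldots,n\}$) rather than some other subset of the crossing chords. This is where the ``left half-plane'' clause of Definition \ref{defi:varphi} must be matched against the actual placement of the endpoints, confirming that for every chord crossed by $a$ it is the endpoint $p_i$, and not $p_i^*$, that lies on the chosen side. Once the cyclic orders are written out explicitly this is a finite geometric verification, but it is the genuine content of the lemma; everything else reduces to matching $\mathcal{D}_a$ with $\mathcal{A}_{n,l}$ and recognizing the $\Delta$-sum as the appropriate $T$-term.
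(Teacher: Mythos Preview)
Your proposal is correct and follows exactly the same route as the paper's own proof: apply Definition~\ref{defi:varphi} to $\mathcal{B}_{n+1,l}$ and $\mathcal{A}_{n+1,l}$ with the distinguished chord $a=(p_{n+1},p_{n+1}^*)$, observe that deleting $a$ yields $\mathcal{A}_{n,l}$ in both cases, and read off the scalar $x-\#I_a$ and the $\Delta$-sum as the appropriate $T$-term. The paper compresses all of this into a single sentence, whereas you spell out the crossing data and the identification of $I_a$ explicitly; your added caution about verifying that $I_a$ is the contiguous block $\{2,\ldots,l+1\}$ (resp.\ $\{2,\ldots,n\}$) is reasonable bookkeeping but not a point the paper dwells on.
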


\begin{proof}
By applying Definition \ref{defi:varphi} on $\mathcal{D} = \mathcal{B}_{n+1,l}$ (respectively on $\mathcal{D} = \mathcal{A}_{n+1,l}$) with $a = (p_{n+1},p_{n+1}^*)$, we obtain Formula (\ref{eq:a}) (respectively Formula (\ref{eq:b})).
\end{proof}

\begin{lemma}
\label{lem:simpl1}
The polynomial $A_{n+1,n-k} - B_{n+1,n-k}+xA_{n,n-k}$ equals
$$(x-(k-1))A_{n,n-k}+ \sum_{j = n-k+2}^{n} \sum_{i = 2}^{j-1} \Delta_{i,j}(\mathcal{A}_{n,n-k})$$
for all $k \in [n]$.
\end{lemma}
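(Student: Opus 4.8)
The plan is to reduce everything to a single application of Lemma~\ref{lem:expands} with the common base diagram $\mathcal{A}_{n,n-k}$, since both $A_{n+1,n-k}$ and $B_{n+1,n-k}$ are expressed there in terms of the same polynomial $A_{n,n-k}$ and of triangular sums $T_{\cdot,\cdot}(\mathcal{A}_{n,n-k})$ over that diagram. Setting $l = n-k$ (which lies in $[0,n-1]$ precisely when $k \in [n]$, matching the hypotheses of Lemma~\ref{lem:expands}), Formula~(\ref{eq:b}) gives $A_{n+1,n-k} = (x-(n-1))A_{n,n-k} + T_{2,n}(\mathcal{A}_{n,n-k})$ and Formula~(\ref{eq:a}) gives $B_{n+1,n-k} = (x-(n-k))A_{n,n-k} + T_{2,n-k+1}(\mathcal{A}_{n,n-k})$.

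First I would subtract these two identities. The coefficients of $A_{n,n-k}$ combine into $(x-(n-1)) - (x-(n-k)) = -(k-1)$, so that $A_{n+1,n-k} - B_{n+1,n-k} = -(k-1)A_{n,n-k} + T_{2,n}(\mathcal{A}_{n,n-k}) - T_{2,n-k+1}(\mathcal{A}_{n,n-k})$. Adding $xA_{n,n-k}$ to both sides immediately turns the linear factor into $x-(k-1)$, which is exactly the first term claimed in the statement; the remaining task is to recognize the difference of triangular sums as the announced double sum.

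To do this I would unfold the definition $T_{a,b}(\mathcal{D}) = \sum_{a \le s < t \le b} \Delta_{s,t}(\mathcal{D})$ and observe that $T_{2,n}(\mathcal{A}_{n,n-k}) - T_{2,n-k+1}(\mathcal{A}_{n,n-k})$ is supported exactly on those pairs $(s,t)$ with $2 \le s < t \le n$ that are not already counted in $T_{2,n-k+1}$, that is, those with larger index $t \ge n-k+2$. Reindexing $(s,t)$ as $(i,j)$ and summing $j$ from $n-k+2$ to $n$ and $i$ from $2$ to $j-1$ then produces $\sum_{j=n-k+2}^{n}\sum_{i=2}^{j-1}\Delta_{i,j}(\mathcal{A}_{n,n-k})$, which is the second term of the statement.

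Since every step is a direct substitution into Lemma~\ref{lem:expands}, I do not expect a serious obstacle; the only point demanding care is the combinatorial bookkeeping in the last step, namely verifying that removing the smaller triangle $T_{2,n-k+1}$ from the larger triangle $T_{2,n}$ leaves precisely the pairs whose larger index exceeds $n-k+1$. This is exactly what justifies the lower limit $n-k+2$ on the outer sum, and getting that index boundary right is the whole content of the argument.
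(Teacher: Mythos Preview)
Your proposal is correct and follows precisely the paper's own argument: the paper's proof is the single sentence ``It is an application of Formula~(\ref{eq:a}) and Formula~(\ref{eq:b}) with $l = n-k$,'' and you have carried out exactly that application, including the bookkeeping that identifies $T_{2,n}(\mathcal{A}_{n,n-k}) - T_{2,n-k+1}(\mathcal{A}_{n,n-k})$ with the stated double sum.
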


\begin{proof}
It is an application of Formula (\ref{eq:a}) and Formula (\ref{eq:b}) with $l = n-k$.
\end{proof}

\begin{lemma}
\label{lem:simpl2}
The polynomial $A_{n+1,k-1} - B_{n+1,k-1}+xA_{n,k-1}$ equals
$$(x-(k-1))A_{n,n-k}+  \sum_{2 \leq i < j \leq k} \Delta_{i,j}(\mathcal{D}_n) + x \Delta_{i,j}(\mathcal{D}_{n-1})- \Delta_{i,j}(\mathcal{A}_{n,k-1})$$
for all $k \in [n]$.
\end{lemma}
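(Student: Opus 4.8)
The plan is to evaluate $A_{n+1,k-1}-B_{n+1,k-1}+xA_{n,k-1}$ by expanding each $(n{+}1)$-chord weight through a \emph{different} deleted chord, and then to collapse everything onto Lemma~\ref{lem:complementariteA}. First I would expand $A_{n+1,k-1}$ through the chord $(p_1,p_1^*)$: Formula~(\ref{eq:c}) with $l=k-1$ gives $A_{n+1,k-1}=(x-(k-1))D_n+T_{2,k}(\mathcal{D}_n)$. Next I would expand $B_{n+1,k-1}$ through the chord $(p_{n+1},p_{n+1}^*)$: Formula~(\ref{eq:a}) with $l=k-1$ gives $B_{n+1,k-1}=(x-(k-1))A_{n,k-1}+T_{2,k}(\mathcal{A}_{n,k-1})$. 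Rewriting the claimed right-hand side with the same notation, its $\Delta$-sum is exactly $T_{2,k}(\mathcal{D}_n)+xT_{2,k}(\mathcal{D}_{n-1})-T_{2,k}(\mathcal{A}_{n,k-1})$.

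Substituting these expansions, the contribution $-B_{n+1,k-1}+xA_{n,k-1}$ collects the scalar $[x-(x-(k-1))]A_{n,k-1}=(k-1)A_{n,k-1}$, while the two bulky sums $T_{2,k}(\mathcal{D}_n)$ and $T_{2,k}(\mathcal{A}_{n,k-1})$ cancel against their counterparts on the target side. What survives is the purely ``$D$/$A$'' identity
\[
(x-(k-1))D_n+(k-1)A_{n,k-1}=(x-(k-1))A_{n,n-k}+xT_{2,k}(\mathcal{D}_{n-1}).
\]
To remove the remaining sum I would invoke Formula~(\ref{eq:c}) with $n$ replaced by $n-1$ and $l=k-1$, namely $A_{n,k-1}=(x-(k-1))D_{n-1}+T_{2,k}(\mathcal{D}_{n-1})$, which turns $xT_{2,k}(\mathcal{D}_{n-1})$ into $xA_{n,k-1}-x(x-(k-1))D_{n-1}$.

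After this substitution the entire difference factors as $(x-(k-1))\bigl[D_n+xD_{n-1}-A_{n,k-1}-A_{n,n-k}\bigr]$, and the bracket vanishes by Lemma~\ref{lem:complementariteA}. I expect the main difficulty to be organisational rather than conceptual: the expansions must be chosen so that the $k$-dependent scalar $x-(k-1)$ factors out globally and the surviving bracket is \emph{precisely} the complementarity relation $A_{n,k-1}+A_{n,n-k}=xD_{n-1}+D_n$; the wrong choice (e.g.\ expanding $A_{n+1,k-1}$ through $(p_{n+1},p_{n+1}^*)$ via Formula~(\ref{eq:b})) leaves an unwieldy $T_{2,n}(\mathcal{A}_{n,k-1})$ that one would then have to reconvert through chord-independence. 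I would also record the boundary case $k=1$, where every $T_{2,k}$ sum is empty and the statement degenerates to $A_{n+1,0}-B_{n+1,0}+xA_{n,0}=xA_{n,n-1}$, i.e.\ $xD_n=xD_n$, so the argument is consistent there as well.
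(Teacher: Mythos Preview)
Your argument is correct and follows essentially the same route as the paper: expand $A_{n+1,k-1}$ via Formula~(\ref{eq:c}), expand $B_{n+1,k-1}$ via Formula~(\ref{eq:a}), then use Formula~(\ref{eq:c}) at level $n$ together with Lemma~\ref{lem:complementariteA} to close the identity. The only cosmetic difference is the order of the last two substitutions---the paper first invokes Lemma~\ref{lem:complementariteA} to rewrite $(x-(k-1))D_n+(k-1)A_{n,k-1}$ as $(x-(k-1))A_{n,n-k}+x(A_{n,k-1}-(x-(k-1))D_{n-1})$ and only then recognizes the bracket as $T_{2,k}(\mathcal{D}_{n-1})$, whereas you do it in the reverse order and factor out $(x-(k-1))$ explicitly.
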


\begin{proof}
By applying Formula (\ref{eq:a}) and Formula (\ref{eq:c}) with $l = k-1$, and in view of Lemma \ref{lem:complementariteA}, we obtain that $A_{n+1,k-1}-B_{n+1,k-1}+xA_{n,k-1}$ equals
\begin{equation*}
\begin{split}
(x-(k-1))A_{n,n-k}+ x(A_{n,k-1}-(x-(k-1))D_{n-1})\\+ \sum_{2 \leq i < j \leq k} \Delta_{i,j}(\mathcal{D}_n) - \Delta_{i,j}(\mathcal{A}_{n,k-1}).
\end{split}
\end{equation*}
The lemma then follows by applying Formula (\ref{eq:c}) one last time to the expression $A_{n,k-1}-(x-(k-1))D_{n-1}$.
\end{proof}

As we will see at the end of this section, the rest of the proof is to show that the polynomials in Lemma \ref{lem:simpl1} and Lemma \ref{lem:simpl2} are equal, in other words, to obtain
\begin{equation}
\label{eq:asimplifier1}
\begin{split}
\sum_{j = n-k+2}^{n} \sum_{i = 2}^{j-1} &\Delta_{i,j}(\mathcal{A}_{n,n-k}) = \\ & \sum_{2 \leq i < j \leq k} \Delta_{i,j}(\mathcal{D}_n) + x \Delta_{i,j}(\mathcal{D}_{n-1}) - \Delta_{i,j}(\mathcal{A}_{n,k-1}).
\end{split}
\end{equation}

\begin{lemma}
\label{lem:lescontributionsnulles}
For all $k \in [n]$, $j \in [n-k+2,n]$ and $i \in [2,n-k+1]$,
$$\Delta_{i,j}(\mathcal{A}_{n,n-k}) + \Delta_{n-k+3-i,2n-k+2-j}(\mathcal{A}_{n,n-k})=0.$$
\end{lemma}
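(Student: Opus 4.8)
The plan is to unfold each $\Delta$ through the surgery definition, $\Delta_{i,j}(\mathcal{A}_{n,n-k}) = \varphi(\mathcal{A}_{i,j}^1) - \varphi(\mathcal{A}_{i,j}^2)$, and to exploit the fact that $\varphi$, being the weight system of a metrized Lie algebra, takes equal values on a chord diagram and on its mirror image. Writing $m = n-k$, the diagram $\mathcal{A}_{n,m}$ consists of the two short chords $(p_1,p_1^*)$ and $(p_n,p_n^*)$ together with $n-2$ diameters, and it carries exactly two mirror symmetries: the reflection $\rho$ whose axis fixes each short chord setwise, and the reflection $\rho'$ whose axis interchanges them. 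First I would record, for each of the $2n$ marked points, its image under $\rho$ and under $\rho'$, and track how the two surgeries on a given pair of chords are permuted; reflection invariance then converts a mirror symmetry of the diagram into a sign-reversing relation between $\Delta_{i,j}$ and the $\Delta$ of the reflected pair.

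Next, for a pair with $i$ among the lower chords $[2,m+1]$ and $j$ among the upper chords $[m+2,n]$, I would apply the reflection sending $i \mapsto m+3-i$ and follow the four endpoints $p_i,p_i^*,p_j,p_j^*$ to their images, checking that the smoothings $\mathcal{A}^1_{i,j}$ and $\mathcal{A}^2_{i,j}$ are exchanged so that the two $\Delta$'s acquire opposite signs. The fixed points of the target involution $(i,j) \mapsto (m+3-i,\,n+m+2-j)$ must be handled separately: there the two surgered diagrams are isomorphic as abstract chord diagrams and $\Delta$ vanishes outright, exactly as in the smallest case $\mathcal{A}_{3,1}$, where $\mathcal{A}^1_{2,3}$ and $\mathcal{A}^2_{2,3}$ are both a crossing pair with one detached chord and hence share the weight $x^3-x^2$.

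The hard part is that neither reflection realizes the stated involution exactly: $\rho$ and $\rho'$ both send an interior upper chord $j$ to chord $n+m+1-j$ rather than to $n+m+2-j$, and they move the short chords in a way that differs from how $(i,j)\mapsto(m+3-i,\,n+m+2-j)$ treats $(p_n,p_n^*)$. Consequently the surgered diagrams attached to $(i,j)$ and to its partner are in general not isomorphic (their crossing graphs already differ for $n=5$), so the cancellation cannot be read off from a single mirror identity. Closing this one-step discrepancy is where the real work lies, and I would attack it through the $4$-term relations of Figure \ref{fig:4term} satisfied by $\varphi$, which relate the four surgered diagrams of the two partnered pairs; equivalently, I would run the recursion of Definition \ref{defi:varphi} with a short chord as the distinguished chord $a$, using that this chord is common to $\mathcal{A}^1_{i,j}$ and $\mathcal{A}^2_{i,j}$ so that the $(x-k)$-term drops out of the difference, and thereby reduce each $\Delta_{i,j}(\mathcal{A}_{n,m})$ to $\Delta$-values on the all-crossing diagrams $\mathcal{D}_{n-1}$ and $\mathcal{D}_{n-2}$. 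On those, Remark \ref{rem:DeltaequalsA} gives $\Delta_{i,j}(\mathcal{D}_N) = B_{N,j-i-1} - B_{N,N-1-(j-i)}$, which is antisymmetric under $j-i \mapsto N-(j-i)$; this is precisely the antisymmetry the claim encodes, and matching it against the shifted reflection relation should produce the identity, with the boundary pair $j=n \leftrightarrow j'=m+2$ (where a short chord is itself surgered) dispatched as a final special case.
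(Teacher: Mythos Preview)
Your opening instinct---unfold $\Delta_{i,j}$ as $\varphi(\mathcal{D}^1)-\varphi(\mathcal{D}^2)$ and use mirror invariance of $\varphi$---is exactly what the paper does, and in the paper this finishes the proof in one line. The problem is that you then talk yourself out of it because of a misidentification of the diagram.

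You describe $\mathcal{A}_{n,m}$ (with $m=n-k$) as having \emph{two} short chords $(p_1,p_1^*)$ and $(p_n,p_n^*)$; that is $\mathcal{B}_{n,m}$, not $\mathcal{A}_{n,m}$. In $\mathcal{A}_{n,m}$ there is a \emph{single} short chord $(p_1,p_1^*)$, and the remaining $n-1$ chords form a copy of $\mathcal{D}_{n-1}$. With the correct picture, take the reflection $\Sigma$ that swaps $p_1\leftrightarrow p_1^*$. Tracking positions around the circle one finds: for $i\in[2,m+1]$, $\Sigma$ sends $p_i\mapsto p_{m+3-i}$ and $p_i^*\mapsto p_{m+3-i}^*$; for $j\in[m+2,n]$, $\Sigma$ sends $p_j\mapsto p_{n+m+2-j}^{\,*}$ and $p_j^*\mapsto p_{n+m+2-j}$. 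Thus chord $i$ goes to chord $m+3-i$ and chord $j$ goes to chord $n+m+2-j$, which is \emph{precisely} the involution in the statement (your computed $n+m+1-j$ is off by one, presumably a casualty of the extra phantom short chord). Moreover, because on the $j$-side $\Sigma$ exchanges a point with a \emph{starred} point, the surgery $(p_i,p_j),(p_i^*,p_j^*)$ is carried to $(p_{i'},p_{j'}^*),(p_{i'}^*,p_{j'})$: that is, $\Sigma(\mathcal{D}^1_{i,j})=\mathcal{D}^2_{i',j'}$. Mirror invariance of $\varphi$ then gives $\Delta_{i,j}+\Delta_{i',j'}=0$ immediately.

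So the ``hard part'' you isolate---a one-step discrepancy to be repaired via the 4-term relations or a recursive expansion through Remark~\ref{rem:DeltaequalsA}---does not exist. The elaborate detour may well be salvageable, but it is unnecessary: once you correct the description of $\mathcal{A}_{n,m}$ and redo the reflection bookkeeping, the lemma is the one-line symmetry argument you started with.
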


\begin{proof}
$\Delta_{i,j}(\mathcal{A}_{n,n-k}) = \varphi(\mathcal{D}^1_{n,i,j}) - \varphi(\mathcal{D}^2_{n,i,j})$ where
\begin{center}
\begin{tikzpicture}[scale=1.5]

\draw (-2,0) node[scale=1] {$\mathcal{D}^1_{n,i,j} = $};
\draw (2,0) node[scale=1] {$\mathcal{D}^2_{n,i,j} = $};

\begin{scope}[xshift= 4cm,rotate=110]

\draw (0:1) arc (0:20:1) ;
\draw (20:1) arc (20:40:1) ;
\draw (40:1) [dashed] arc (40:100:1) ;
\draw (100:1) arc (100:120:1) ;
\draw (120:1) arc (120:140:1) ;
\draw (140:1) [dashed] arc (140:180:1) ;
\draw (180:1) arc (180:220:1) ;
\draw (220:1) [dashed] arc (220:280:1) ;
\draw (280:1) arc (280:320:1) ;
\draw (320:1) [dashed] arc (320:360:1) ;

\fill (0:1) circle(0.03);
\draw (0:1.2) node[scale=0.75] {$p_{n}^*$};
\fill (20:1) circle(0.03);
\draw (20:1.2) node[scale=0.75] {$p_1$};
\fill (40:1) circle(0.03);
\draw (40:1.2) node[scale=0.75] {$p_2$};
\fill (70:1) circle(0.03);
\draw (70:1.2) node[scale=0.75] {$p_i$};
\fill (100:1) circle(0.03);
\draw (100:1.3) node[scale=0.75] {$p_{n-k+1}$};
\fill (120:1) circle(0.03);
\draw (120:1.2) node[scale=0.75] {$p_1^*$};
\fill (140:1) circle(0.03);
\draw (140:1.2) node[scale=0.75] {$p_{n-k+2}$};
\fill (160:1) circle(0.03);
\draw (160:1.2) node[scale=0.75] {$p_j$};
\fill (180:1) circle(0.03);
\draw (180:1.2) node[scale=0.75] {$p_{n}$};
\fill (220:1) circle(0.03);
\draw (220:1.2) node[scale=0.75] {$p_2^*$};
\fill (280:1) circle(0.03);
\draw (280:1.4) node[scale=0.75] {$p_{n-k+1}^*$};
\fill (320:1) circle(0.03);
\draw (310:1.3) node[scale=0.75] {$p_{n-k+2}^*$};
\fill (250:1) circle(0.03);
\draw (250:1.2) node[scale=0.75] {$p_{i}^*$};
\fill (340:1) circle(0.03);
\draw (340:1.2) node[scale=0.75] {$p_{j}^*$};

\draw (0:1) -- (180:1);
\draw (20:1) to[bend left] (120:1);
\draw (40:1) -- (220:1);
\draw (100:1) -- (280:1);
\draw (140:1) -- (320:1);
\draw (70:1) to[bend right] (340:1);
\draw (160:1) to[bend left] (250:1);

 \fill[fill=gray, opacity=0.3]
    (0,0) -- (40:1) arc (40:100:1cm);
 \fill[fill=gray, opacity=0.3]
    (0,0) -- (140:1) arc (140:180:1cm);
     \fill[fill=gray, opacity=0.3]
    (0,0) -- (220:1) arc (220:280:1cm);
     \fill[fill=gray, opacity=0.3]
    (0,0) -- (320:1) arc (320:360:1cm);
\end{scope} 

\begin{scope}[rotate=110]

\draw (0:1) arc (0:20:1) ;
\draw (20:1) arc (20:40:1) ;
\draw (40:1) [dashed] arc (40:100:1) ;
\draw (100:1) arc (100:120:1) ;
\draw (120:1) arc (120:140:1) ;
\draw (140:1) [dashed] arc (140:180:1) ;
\draw (180:1) arc (180:220:1) ;
\draw (220:1) [dashed] arc (220:280:1) ;
\draw (280:1) arc (280:320:1) ;
\draw (320:1) [dashed] arc (320:360:1) ;

\fill (0:1) circle(0.03);
\draw (0:1.2) node[scale=0.75] {$p_{n}^*$};
\fill (20:1) circle(0.03);
\draw (20:1.2) node[scale=0.75] {$p_1$};
\fill (40:1) circle(0.03);
\draw (40:1.2) node[scale=0.75] {$p_2$};
\fill (70:1) circle(0.03);
\draw (70:1.2) node[scale=0.75] {$p_i$};
\fill (100:1) circle(0.03);
\draw (100:1.3) node[scale=0.75] {$p_{n-k+1}$};
\fill (120:1) circle(0.03);
\draw (120:1.2) node[scale=0.75] {$p_1^*$};
\fill (140:1) circle(0.03);
\draw (140:1.2) node[scale=0.75] {$p_{n-k+2}$};
\fill (160:1) circle(0.03);
\draw (160:1.2) node[scale=0.75] {$p_j$};
\fill (180:1) circle(0.03);
\draw (180:1.2) node[scale=0.75] {$p_{n}$};
\fill (220:1) circle(0.03);
\draw (220:1.2) node[scale=0.75] {$p_2^*$};
\fill (280:1) circle(0.03);
\draw (280:1.4) node[scale=0.75] {$p_{n-k+1}^*$};
\fill (320:1) circle(0.03);
\draw (310:1.3) node[scale=0.75] {$p_{n-k+2}^*$};
\fill (250:1) circle(0.03);
\draw (250:1.2) node[scale=0.75] {$p_{i}^*$};
\fill (340:1) circle(0.03);
\draw (340:1.2) node[scale=0.75] {$p_{j}^*$};

\draw (0:1) -- (180:1);
\draw (20:1) to[bend left] (120:1);
\draw (40:1) -- (220:1);
\draw (100:1) -- (280:1);
\draw (140:1) -- (320:1);
\draw (70:1) to[bend left] (160:1);
\draw (250:1) to[bend left] (340:1);

 \fill[fill=gray, opacity=0.3]
    (0,0) -- (40:1) arc (40:100:1cm);
 \fill[fill=gray, opacity=0.3]
    (0,0) -- (140:1) arc (140:180:1cm);
     \fill[fill=gray, opacity=0.3]
    (0,0) -- (220:1) arc (220:280:1cm);
     \fill[fill=gray, opacity=0.3]
    (0,0) -- (320:1) arc (320:360:1cm);
\end{scope} 

\end{tikzpicture}

\end{center}

Now, if $\Sigma$ is the axial symmetry that maps $p_1$ to $p_1^*$, it is easy to check that
$$\Sigma(\mathcal{D}^1_{n,i,j}) = \mathcal{D}^2_{n,n-k+3-i,2n-k+2-j}.$$
Moreover, from Definition \ref{defi:varphi}, it is straightforward by induction on the order $n$ of any chord diagram $\mathcal{D}$ that $\varphi(\Sigma(\mathcal{D})) = \varphi(\mathcal{D})$, thence the lemma.
\end{proof}

In view of Lemma \ref{lem:lescontributionsnulles}, Formula (\ref{eq:asimplifier1}) that we need to prove becomes

\begin{equation}
\label{eq:asimplifier2}
\sum_{2 \leq i < j \leq k} \begin{pmatrix}
\Delta_{n-k+i,n-k+j}(\mathcal{A}_{n,n-k}) + \Delta_{i,j}(\mathcal{A}_{n,k-1}) \\
- \Delta_{i,j}(\mathcal{D}_n) - x \Delta_{i,j}(\mathcal{D}_{n-1})
\end{pmatrix} = 0.
\end{equation}

\begin{lemma}
\label{lem:expansionB}
For all $2 \leq i < j \leq k \leq n$,
\begin{align*}
B_{n,j-i-1} = &(x-(n-3))B_{n-1,j-i-1}\\&+ T_{2,n-2}(\mathcal{B}_{n-1,j-i-1})\\&- 2 R_{2,k-i,k-i+1,n-2}(\mathcal{B}_{n-1,j-i-1})\\
B_{n,n-1-(j-i)} = &(x-(n-1))B_{n-1,n-2-(j-i)} \\
&+ T_{1,n-1}(\mathcal{B}_{n-1,n-2-(j-i)})\\&- 2 R_{1,k-j+1,k-j+2,n-1}(\mathcal{B}_{n-1,n-2-(j-i)}).
\end{align*}
Incidentally, the sums of polynomials $R_{2,k-i,k-i+1,n-2}(\mathcal{B}_{n-1,j-i-1})$ and $R_{1,k-j+1,k-j+2,n-1}(\mathcal{B}_{n-1,n-2-(j-i)})$ do not depend on $k$.
\end{lemma}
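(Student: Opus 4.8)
The plan is to obtain each of the two identities from a single application of Definition \ref{defi:varphi} to $\mathcal{B}_{n,l}$, deleting one well-chosen chord and then reading the resulting $(n-1)$-chord diagram as a relabelling of $\mathcal{B}_{n-1,\cdot}$. For the first identity I would set $l=j-i-1$ and expand $B_{n,l}=\varphi(\mathcal{B}_{n,l})$ along the diameter $a=(p_n,p_n^*)$; for the second I would set $l=n-1-(j-i)$ and expand along the chord $a=(p_2,p_2^*)$, which in $\mathcal{B}_{n,l}$ meets every other chord.

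First I would identify the deletions and count $I_a$. Removing $(p_n,p_n^*)$ leaves $(p_1,p_1^*)$ crossing the same chords, so up to an orientation-preserving diffeomorphism the deletion is $\mathcal{B}_{n-1,l}$ with unchanged parameter, matching the factor $B_{n-1,j-i-1}$; removing $(p_2,p_2^*)$ instead destroys one crossing of $(p_1,p_1^*)$, so the deletion is $\mathcal{B}_{n-1,l-1}=\mathcal{B}_{n-1,n-2-(j-i)}$. Meanwhile the diameter $(p_n,p_n^*)$ meets the $n-2$ chords $(p_2,p_2^*),\dots,(p_{n-1},p_{n-1}^*)$, of which all but one contribute to $I_a$, giving $\#I_a=n-3$; whereas $(p_2,p_2^*)$ meets all $n-1$ remaining chords with every one contributing, giving $\#I_a=n-1$. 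Since $\varphi$ is invariant under orientation-preserving diffeomorphisms, this already yields the scalar parts $(x-(n-3))B_{n-1,l}$ and $(x-(n-1))B_{n-1,l-1}$.

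The core of the proof is the surgery sum. After the canonical relabelling of the deletion, $I_a$ fills the contiguous range $[2,n-2]$ (resp. $[1,n-1]$), so the naive value of $\sum_{\{s,t\}\subset I_a}\Delta_{s,t}$ is $T_{2,n-2}$ (resp. $T_{1,n-1}$). However, that relabelling reverses the starred/unstarred roles of the endpoints of the chords in one block; for a pair $(s,t)$ having exactly one index in that block, the diagrams $\mathcal{D}^1_{s,t}$ and $\mathcal{D}^2_{s,t}$ of Definition \ref{defi:varphi} are interchanged, so $\Delta_{s,t}$ changes sign, while for pairs with both or neither index in the block it is unchanged. The sign-reversed pairs thus form a rectangle, and turning $+\Delta_{s,t}$ into $-\Delta_{s,t}$ on it subtracts $2R_{\cdots}$, producing the corrections $-2R_{2,k-i,k-i+1,n-2}$ and $-2R_{1,k-j+1,k-j+2,n-1}$. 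Pinning down the exact block, hence the exact rectangle (and where $i$ and $j$ enter), is the step I expect to be the main obstacle, since it requires tracking every endpoint through the deletion and the relabelling.

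Finally, for the \emph{incidental} claim I would prove that these two rectangles have $k$-independent total. Writing $R_{a,b,b+1,d}=T_{a,d}-T_{a,b}-T_{b+1,d}$ from Formula (\ref{eq:relationTR}), the only $k$-dependence sits in the splitting index $b$ (namely $k-i$, resp. $k-j+1$). I would argue that the terms entering and leaving the rectangle as $b$ increases pair up and cancel, by the same reflection-and-antisymmetry principle used in Lemma \ref{lem:lescontributionsnulles} (now applied to $\mathcal{B}_{n-1,l}$), so the sum is unchanged. This invariance is exactly what lets the outer summation in Formula (\ref{eq:asimplifier2}) collapse.
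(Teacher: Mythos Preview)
Your plan for the first identity fails at the outset: in $\mathcal{B}_{n,l}$ the chord $(p_n,p_n^*)$ is \emph{not} a diameter. By the very definition of $\mathcal{B}_{n,k}$ it is one of the two short chords, crossing exactly $l=j-i-1$ other chords, so expanding along it gives a leading factor $(x-(j-i-1))$, not $(x-(n-3))$. Worse, deleting $(p_n,p_n^*)$ removes one of the two short chords, and the residual diagram is $\mathcal{A}_{n-1,l}$, not $\mathcal{B}_{n-1,l}$; so neither the scalar part nor the argument of the $T$ and $R$ terms can match the statement. The paper instead deletes the \emph{middle} chord $a=(p_{k-i+1},p_{k-i+1}^*)$, which (since $k-i+1>l+1$) misses both short chords and hence meets exactly the other $n-3$ middle chords; its removal genuinely leaves $\mathcal{B}_{n-1,l}$. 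The split of $I_a$ at $k-i$ versus $k-i+1$ then falls out of which side of $a$ each remaining middle chord lies on, and Formula~(\ref{eq:relationTR}) converts $T_{2,k-i}+T_{k-i+1,n-2}-R$ into $T_{2,n-2}-2R$.

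For the second identity your chord $(p_2,p_2^*)$ does cross all $n-1$ remaining chords and its deletion is indeed $\mathcal{B}_{n-1,n-2-(j-i)}$, so the scalar part is correct. But with this $k$-independent choice of $a$, the surgery sum cannot possibly produce a split at the $k$-dependent index $k-j+1$; you would obtain the formula with some fixed split and then have to \emph{first} prove the $R$-term is independent of the splitting index before you could match the stated identity. The paper goes the other way: it deletes $a=(p_{k-j+2},p_{k-j+2}^*)$, so the split at $k-j+1$ is forced by the geometry, and the ``incidental'' $k$-independence of $R$ is then a free consequence of the fact that the left-hand side $B_{n,\cdot}$ does not involve $k$.
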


\begin{proof}
By applying Definition \ref{defi:varphi} on $\mathcal{D} = \mathcal{B}_{n,j-i-1}$ and the chord $a = (p_{k-i+1},p_{k-i+1}^*)$ (respectively on $\mathcal{D} = \mathcal{B}_{n,n-1-(j-i)}$ and the chord $a = (p_{k-j+2},p_{k-j+2}^*)$), we obtain the two respective formulas
\begin{align*}
B_{n,j-i-1} = &(x-(n-3))B_{n-1,j-i-1} \\
&+ T_{2,k-i}(\mathcal{B}_{n-1,j-i-1}) + T_{k-i+1,n-2}(\mathcal{B}_{n-1,j-i-1})\\&- R_{2,k-i,n-2}(\mathcal{B}_{n-1,j-i-1}),\\
B_{n,n-1-(j-i)} = &(x-(n-1))B_{n-1,n-2-(j-i)} \\
&+ T_{1,k-j+1}(\mathcal{B}_{n-1,n-2-(j-i)}) + T_{k-j+2,n-1}(\mathcal{B}_{n-1,n-2-(j-i)})\\&- R_{1,k-j+1,n-1}(\mathcal{B}_{n-1,n-2-(j-i)}),
\end{align*}
and the equations of the lemma then follow from Formula (\ref{eq:relationTR}).\end{proof}

\begin{lemma}
\label{lem:lescontributionsnonnulles}
Let $k \in [n]$ and $2 \leq i < j \leq k$. We consider the pair $$(I,J) = (n-k+i,n-k+j).$$ Then :
$$\Delta_{I,J}(\mathcal{A}_{n,n-k})+ \Delta_{i,j}(\mathcal{A}_{n,k-1}) = \Delta_{i,j}(\mathcal{D}_n) + x  \Delta_{i,j}(\mathcal{D}_{n-1}) .$$
\end{lemma}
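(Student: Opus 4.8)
The plan is to expand both sides through the definition $\Delta_{s,t}(\mathcal D)=\varphi(\mathcal D^1_{s,t})-\varphi(\mathcal D^2_{s,t})$ and to show that the resulting $\varphi$-values match. On the right-hand side I would first invoke Remark~\ref{rem:DeltaequalsA} to replace $\Delta_{i,j}(\mathcal D_n)$ and $\Delta_{i,j}(\mathcal D_{n-1})$ by $B_{n,j-i-1}-B_{n,n-1-(j-i)}$ and $B_{n-1,j-i-1}-B_{n-1,n-2-(j-i)}$ respectively, so that the right-hand side becomes a combination of the weights $B_{n,\cdot}$ and $B_{n-1,\cdot}$ depending on $i,j$ only through $d=j-i$. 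Then Lemma~\ref{lem:expansionB} rewrites each $B_{n,\cdot}$ as $(x-\mathrm{const})\,B_{n-1,\cdot}$ plus $T$- and $R$-correction sums which, as observed there, do not depend on $k$.

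On the left-hand side I would resolve the pair $(I,J)=(n-k+i,n-k+j)$ in $\mathcal A_{n,n-k}$ and the pair $(i,j)$ in $\mathcal A_{n,k-1}$, drawing the four resolved diagrams explicitly as was done for $\mathcal D^1_{n,i,j},\mathcal D^2_{n,i,j}$ in Lemma~\ref{lem:lescontributionsnulles}. The shift $I=n-k+i$, $J=n-k+j$ is arranged precisely so that the four endpoints $p_I,p_J,p_I^*,p_J^*$ occupy, in $\mathcal A_{n,n-k}$, the cyclic position that makes these resolutions comparable to the resolutions of $(i,j)$ in $\mathcal A_{n,k-1}$ and in $\mathcal D_n$. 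I would then apply Definition~\ref{defi:varphi} to each resolved $\mathcal A$-diagram along a conveniently chosen chord (as in Lemmas~\ref{lem:expands}--\ref{lem:expansionB}) so as to produce the same leading terms $(x-\mathrm{const})\,B_{n-1,\cdot}$ together with $T$- and $R$-sums; the $k$-independence of the corrections is exactly what lets them cancel against those coming from the right-hand side.

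Two structural facts drive the identifications. The axial symmetry $\Sigma$ exchanging $p_1$ and $p_1^*$ preserves $\varphi$ (proved inside Lemma~\ref{lem:lescontributionsnulles}), so resolved diagrams that are $\Sigma$-images of each other may be identified. And whenever a resolution creates a chord crossing no other chord, the $k=0$ instance of Definition~\ref{defi:varphi} peels it off with a factor $x$ and drops the order by one; this is the mechanism by which the extra term $x\Delta_{i,j}(\mathcal D_{n-1})$ is produced, the reduced diagram being precisely a resolution of $\mathcal D_{n-1}$ at $(i,j)$. The case $n-k=0$, where $(p_1,p_1^*)$ is already isolated in $\mathcal A_{n,0}$ and $\Delta_{i,j}(\mathcal A_{n,0})=x\,\Delta_{i,j}(\mathcal D_{n-1})$ outright, is the clean prototype of this phenomenon.

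The main obstacle is bookkeeping rather than ideas: one must read off the crossing pattern of every resolved diagram faithfully and, in each application of Definition~\ref{defi:varphi}, decide correctly on which side of the deleted chord the relevant endpoints lie, since this fixes the integer $\#I_a$ and hence the coefficients $x-\mathrm{const}$. I expect the delicate points to be exactly the degenerate configurations: $j=k$, where $p_J$ coincides with the special chord $(p_n,p_n^*)$; the extreme value $i=2$; and small $n$ or $k$ close to $n$, where the index ranges in the pictures of $\mathcal A_{n,k}$ collapse and the diagrams must be described by hand. Each of these should be dispatched by a direct check.
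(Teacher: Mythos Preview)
Your overall strategy---expand the left-hand side into four $\varphi$-values, apply Definition~\ref{defi:varphi} to each, and match against the right-hand side rewritten via Remark~\ref{rem:DeltaequalsA} and Lemma~\ref{lem:expansionB}---is exactly the paper's. The paper carries this out by deleting the chord $a=(p_1,p_1^*)$ from each of the four resolved diagrams $\mathcal D^1,\mathcal D^2,\mathcal D^3,\mathcal D^4$; this reduces each to a $(x-\#I_a)\,B_{n-1,\cdot}$ term plus $T$- and $R$-sums over $\mathcal B_{n-1,\cdot}$, and then Formula~(\ref{eq:relationTR}) together with the obvious additivity $R_{a,b,b+1,c}+R_{a,b,c+1,d}=R_{a,b,b+1,d}$ makes the pieces combine exactly into the expressions from Lemma~\ref{lem:expansionB}, yielding $\varphi(\mathcal D^1)+\varphi(\mathcal D^3)=B_{n,j-i-1}+xB_{n-1,j-i-1}$ and similarly for $\mathcal D^2,\mathcal D^4$.

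Two of your announced mechanisms, however, are off and would stall an actual execution. First, the axial symmetry $\Sigma$ plays no role here; it is the engine of Lemma~\ref{lem:lescontributionsnulles}, but in the present lemma no two of the four resolved diagrams are $\Sigma$-images of one another, and the paper never invokes $\Sigma$. Second, the factor $x$ in $x\,\Delta_{i,j}(\mathcal D_{n-1})$ does \emph{not} come from peeling off an isolated chord: for generic $2\le i<j\le k$ none of the resolved diagrams contains one. It comes from coefficient arithmetic. In $\mathcal D^1$ (from $\mathcal A_{n,k-1}$) the chord $(p_1,p_1^*)$ crosses $k-3$ chords, while in $\mathcal D^3$ (from $\mathcal A_{n,n-k}$) it crosses $n-k$; summing gives $(x-(k-3))+(x-(n-k))=2x-(n-3)$, which is precisely $(x-(n-3))+x$, the leading coefficient in Lemma~\ref{lem:expansionB} plus the extra $x$. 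The degenerate cases you worry about ($j=k$, $i=2$, small $n$) require no separate treatment once the chord $a=(p_1,p_1^*)$ is fixed and the index ranges in the $T$- and $R$-sums are read off carefully.
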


\begin{proof}We have 
\begin{align*}
\Delta_{i,j}(\mathcal{A}_{n,k-1}) &= \varphi(\mathcal{D}^1_{n,i,j}) - \varphi(\mathcal{D}^2_{n,i,j}),\\
\Delta_{I,J}(\mathcal{A}_{n,n-k}) &= \varphi(\mathcal{D}^3_{n,I,J}) - \varphi(\mathcal{D}^4_{n,I,J})
\end{align*}
where
\begin{center}
\begin{tikzpicture}[scale=1.35]

\draw (-2,0) node[scale=1] {$\mathcal{D}^1_{n,i,j} = $};
\draw (1.9,0) node[scale=1] {$\mathcal{D}^2_{n,i,j} = $};
\draw (-2,-2.7) node[scale=1] {$\mathcal{D}^3_{n,i,j} = $};
\draw (1.9,-2.7) node[scale=1] {$\mathcal{D}^4_{n,i,j} = $};

\begin{scope}[xshift=3.9cm,yshift = -2.7cm, rotate=200]

\draw (0:1) arc (0:20:1) ;
\draw (20:1) arc (20:40:1) ;
\draw (40:1) [dashed] arc (40:100:1) ;
\draw (100:1) arc (100:120:1) ;
\draw (120:1) arc (120:140:1) ;
\draw (140:1) [dashed] arc (140:180:1) ;
\draw (180:1) arc (180:220:1) ;
\draw (220:1) [dashed] arc (220:280:1) ;
\draw (280:1) arc (280:320:1) ;
\draw (320:1) [dashed] arc (320:360:1) ;

\fill (0:1) circle(0.03);
\draw (0:1.3) node[scale=0.75] {$p_{n-k+1}$};
\fill (20:1) circle(0.03);
\draw (20:1.2) node[scale=0.75] {$p_1^*$};
\fill (40:1) circle(0.03);
\draw (40:1.2) node[scale=0.75] {$p_{n-k+2}$};
\fill (100:1) circle(0.03);
\draw (100:1.3) node[scale=0.75] {$p_{n}$};
\fill (140:1) circle(0.03);
\draw (140:1.2) node[scale=0.75] {$p_{2}^*$};
\fill (180:1) circle(0.03);
\draw (180:1.35) node[scale=0.75] {$p_{n-k+1}^*$};
\fill (220:1) circle(0.03);
\draw (210:1.3) node[scale=0.75] {$p_{n-k+2}^*$};
\fill (280:1) circle(0.03);
\draw (280:1.2) node[scale=0.75] {$p_{n}^*$};
\fill (-60:1) circle(0.03);
\draw (-60:1.3) node[scale=0.75] {$p_1$};
\fill (-40:1) circle(0.03);
\draw (-40:1.3) node[scale=0.75] {$p_2$};
\fill (60:1) circle(0.03);
\draw (60:1.2) node[scale=0.75] {$p_I$};
\fill (80:1) circle(0.03);
\draw (80:1.2) node[scale=0.75] {$p_J$};
\fill (240:1) circle(0.03);
\draw (240:1.2) node[scale=0.75] {$p_I^*$};
\fill (260:1) circle(0.03);
\draw (260:1.2) node[scale=0.75] {$p_J^*$};

\draw (0:1) -- (180:1);
\draw (-60:1) to[bend left] (20:1);
\draw (40:1) -- (220:1);
\draw (100:1) -- (280:1);
\draw (140:1) -- (320:1);
\draw (60:1) to (260:1);
\draw (80:1) to (240:1);

 \fill[fill=gray, opacity=0.3]
    (0,0) -- (40:1) arc (40:100:1cm);
 \fill[fill=gray, opacity=0.3]
    (0,0) -- (140:1) arc (140:180:1cm);
     \fill[fill=gray, opacity=0.3]
    (0,0) -- (220:1) arc (220:280:1cm);
     \fill[fill=gray, opacity=0.3]
    (0,0) -- (320:1) arc (320:360:1cm);
\end{scope} 

\begin{scope}[yshift = -2.7cm, rotate=200]

\draw (0:1) arc (0:20:1) ;
\draw (20:1) arc (20:40:1) ;
\draw (40:1) [dashed] arc (40:100:1) ;
\draw (100:1) arc (100:120:1) ;
\draw (120:1) arc (120:140:1) ;
\draw (140:1) [dashed] arc (140:180:1) ;
\draw (180:1) arc (180:220:1) ;
\draw (220:1) [dashed] arc (220:280:1) ;
\draw (280:1) arc (280:320:1) ;
\draw (320:1) [dashed] arc (320:360:1) ;

\fill (0:1) circle(0.03);
\draw (0:1.3) node[scale=0.75] {$p_{n-k+1}$};
\fill (20:1) circle(0.03);
\draw (20:1.2) node[scale=0.75] {$p_1^*$};
\fill (40:1) circle(0.03);
\draw (40:1.2) node[scale=0.75] {$p_{n-k+2}$};
\fill (100:1) circle(0.03);
\draw (100:1.3) node[scale=0.75] {$p_{n}$};
\fill (140:1) circle(0.03);
\draw (140:1.2) node[scale=0.75] {$p_{2}^*$};
\fill (180:1) circle(0.03);
\draw (180:1.35) node[scale=0.75] {$p_{n-k+1}^*$};
\fill (220:1) circle(0.03);
\draw (210:1.3) node[scale=0.75] {$p_{n-k+2}^*$};
\fill (280:1) circle(0.03);
\draw (280:1.2) node[scale=0.75] {$p_{n}^*$};
\fill (-60:1) circle(0.03);
\draw (-60:1.3) node[scale=0.75] {$p_1$};
\fill (-40:1) circle(0.03);
\draw (-40:1.3) node[scale=0.75] {$p_2$};
\fill (60:1) circle(0.03);
\draw (60:1.2) node[scale=0.75] {$p_I$};
\fill (80:1) circle(0.03);
\draw (80:1.2) node[scale=0.75] {$p_J$};
\fill (240:1) circle(0.03);
\draw (240:1.2) node[scale=0.75] {$p_I^*$};
\fill (260:1) circle(0.03);
\draw (260:1.2) node[scale=0.75] {$p_J^*$};

\draw (0:1) -- (180:1);
\draw (-60:1) to[bend left] (20:1);
\draw (40:1) -- (220:1);
\draw (100:1) -- (280:1);
\draw (140:1) -- (320:1);
\draw (60:1) to[bend left] (80:1);
\draw (240:1) to[bend left] (260:1);

 \fill[fill=gray, opacity=0.3]
    (0,0) -- (40:1) arc (40:100:1cm);
 \fill[fill=gray, opacity=0.3]
    (0,0) -- (140:1) arc (140:180:1cm);
     \fill[fill=gray, opacity=0.3]
    (0,0) -- (220:1) arc (220:280:1cm);
     \fill[fill=gray, opacity=0.3]
    (0,0) -- (320:1) arc (320:360:1cm);
\end{scope} 

\begin{scope}[rotate=110]

\draw (0:1) arc (0:20:1) ;
\draw (20:1) arc (20:40:1) ;
\draw (40:1) [dashed] arc (40:100:1) ;
\draw (100:1) arc (100:120:1) ;
\draw (120:1) arc (120:140:1) ;
\draw (140:1) [dashed] arc (140:180:1) ;
\draw (180:1) arc (180:220:1) ;
\draw (220:1) [dashed] arc (220:280:1) ;
\draw (280:1) arc (280:320:1) ;
\draw (320:1) [dashed] arc (320:360:1) ;

\fill (0:1) circle(0.03);
\draw (0:1.2) node[scale=0.75] {$p_{n}^*$};
\fill (20:1) circle(0.03);
\draw (20:1.2) node[scale=0.75] {$p_1$};
\fill (40:1) circle(0.03);
\draw (40:1.2) node[scale=0.75] {$p_2$};
\fill (100:1) circle(0.03);
\draw (100:1.2) node[scale=0.75] {$p_{k}$};
\fill (120:1) circle(0.03);
\draw (120:1.2) node[scale=0.75] {$p_1^*$};
\fill (140:1) circle(0.03);
\draw (140:1.2) node[scale=0.75] {$p_{k+1}$};
\fill (180:1) circle(0.03);
\draw (180:1.2) node[scale=0.75] {$p_{n}$};
\fill (220:1) circle(0.03);
\draw (220:1.2) node[scale=0.75] {$p_2^*$};
\fill (280:1) circle(0.03);
\draw (280:1.2) node[scale=0.75] {$p_{k}^*$};
\fill (320:1) circle(0.03);
\draw (320:1.2) node[scale=0.75] {$p_{k+1}^*$};
\fill (60:1) circle(0.03);
\draw (60:1.2) node[scale=0.75] {$p_i$};
\fill (80:1) circle(0.03);
\draw (80:1.2) node[scale=0.75] {$p_j$};
\fill (240:1) circle(0.03);
\draw (240:1.2) node[scale=0.75] {$p_{i}^*$};
\fill (260:1) circle(0.03);
\draw (260:1.2) node[scale=0.75] {$p_{j}^*$};

\draw (0:1) -- (180:1);
\draw (20:1) to[bend left] (120:1);
\draw (40:1) -- (220:1);
\draw (100:1) -- (280:1);
\draw (140:1) -- (320:1);
\draw (60:1) to[bend left] (80:1);
\draw (240:1) to[bend left] (260:1);

 \fill[fill=gray, opacity=0.3]
    (0,0) -- (40:1) arc (40:100:1cm);
 \fill[fill=gray, opacity=0.3]
    (0,0) -- (140:1) arc (140:180:1cm);
     \fill[fill=gray, opacity=0.3]
    (0,0) -- (220:1) arc (220:280:1cm);
     \fill[fill=gray, opacity=0.3]
    (0,0) -- (320:1) arc (320:360:1cm);
\end{scope} 

\begin{scope}[xshift=3.9cm, rotate=110]

\draw (0:1) arc (0:20:1) ;
\draw (20:1) arc (20:40:1) ;
\draw (40:1) [dashed] arc (40:100:1) ;
\draw (100:1) arc (100:120:1) ;
\draw (120:1) arc (120:140:1) ;
\draw (140:1) [dashed] arc (140:180:1) ;
\draw (180:1) arc (180:220:1) ;
\draw (220:1) [dashed] arc (220:280:1) ;
\draw (280:1) arc (280:320:1) ;
\draw (320:1) [dashed] arc (320:360:1) ;

\fill (0:1) circle(0.03);
\draw (0:1.2) node[scale=0.75] {$p_{n}^*$};
\fill (20:1) circle(0.03);
\draw (20:1.2) node[scale=0.75] {$p_1$};
\fill (40:1) circle(0.03);
\draw (40:1.2) node[scale=0.75] {$p_2$};
\fill (100:1) circle(0.03);
\draw (100:1.2) node[scale=0.75] {$p_{k}$};
\fill (120:1) circle(0.03);
\draw (120:1.2) node[scale=0.75] {$p_1^*$};
\fill (140:1) circle(0.03);
\draw (140:1.2) node[scale=0.75] {$p_{k+1}$};
\fill (180:1) circle(0.03);
\draw (180:1.2) node[scale=0.75] {$p_{n}$};
\fill (220:1) circle(0.03);
\draw (220:1.2) node[scale=0.75] {$p_2^*$};
\fill (280:1) circle(0.03);
\draw (280:1.2) node[scale=0.75] {$p_{k}^*$};
\fill (320:1) circle(0.03);
\draw (320:1.2) node[scale=0.75] {$p_{k+1}^*$};
\fill (60:1) circle(0.03);
\draw (60:1.2) node[scale=0.75] {$p_i$};
\fill (80:1) circle(0.03);
\draw (80:1.2) node[scale=0.75] {$p_j$};
\fill (240:1) circle(0.03);
\draw (240:1.2) node[scale=0.75] {$p_{i}^*$};
\fill (260:1) circle(0.03);
\draw (260:1.2) node[scale=0.75] {$p_{j}^*$};

\draw (0:1) -- (180:1);
\draw (20:1) to[bend left] (120:1);
\draw (40:1) -- (220:1);
\draw (100:1) -- (280:1);
\draw (140:1) -- (320:1);
\draw (80:1) to (240:1);
\draw (60:1) to (260:1);

 \fill[fill=gray, opacity=0.3]
    (0,0) -- (40:1) arc (40:100:1cm);
 \fill[fill=gray, opacity=0.3]
    (0,0) -- (140:1) arc (140:180:1cm);
     \fill[fill=gray, opacity=0.3]
    (0,0) -- (220:1) arc (220:280:1cm);
     \fill[fill=gray, opacity=0.3]
    (0,0) -- (320:1) arc (320:360:1cm);
\end{scope} 

\end{tikzpicture}

\end{center}

By applying Definition \ref{defi:varphi} with $a = (p_1,p_1^*)$, we obtain
\begin{align*}
\varphi(\mathcal{D}^1_{n,i,j}) = &(x-(k-3))B_{n-1,j-i-1} \\&+ T_{2,k-i}(\mathcal{B}_{n-1,j-i-1}) + T_{n-i+1,n-2}(\mathcal{B}_{n-1,j-i-1}) \\&- R_{2,k-i,n-i+1,n-2}(\mathcal{B}_{n-1,j-i-1}),\\
\varphi(\mathcal{D}^2_{n,i,j}) = &(x-(k-1))B_{n-1,n-2-(j-i)} \\&+ T_{1,k-j+1}(\mathcal{B}_{n-1,n-2-(j-i)}) + T_{n-j+2,n-1}(\mathcal{B}_{n-1,n-2-(j-i)}) \\&- R_{1,k-j+1,n-j+2,n-1}(\mathcal{B}_{n-1,n-2-(j-i)}),\\
\varphi(\mathcal{D}^3_{n,i,j}) = &(x-(n-k))B_{n-1,j-i-1} + T_{k-i+1,n-i}(\mathcal{B}_{n-1,j-i-1}),\\
\varphi(\mathcal{D}^4_{n,i,j}) = &(x-(n-k))B_{n-1,n-2-(j-i)} + T_{k-j+2,n-j+1}(\mathcal{B}_{n-1,n-2-(j-i)}).
\end{align*}
It is then a consequence of Formula (\ref{eq:relationTR}) and Lemma \ref{lem:expansionB} that 
\begin{align*}
\varphi(\mathcal{D}^1_{n,i,j}) + \varphi(\mathcal{D}^3_{n,i,j}) &= B_{n,j-i-1} + x B_{n-1,j-i-1},\\
\varphi(\mathcal{D}^2_{n,i,j}) + \varphi(\mathcal{D}^4_{n,i,j}) &= B_{n,n-1-(j-i)} + x B_{n-1,n-2-(j-i)},
\end{align*}
in view of
\begin{align*}
R_{2,k-i,k-i+1,n-i} + R_{2,k-i,n-i+1,n-2}  &= R_{2,k-i,k-i+1,n-2},\\
R_{1,k-j+1,k-j+2,n-j+1} + R_{1,k-j+1,n-j+2,n-1} &= R_{1,k-j+1,k-j+2,n-1}.
\end{align*}
This proves the lemma because of Remark \ref{rem:DeltaequalsA}.
\end{proof}

Lemma \ref{lem:lescontributionsnonnulles} proves Formula (\ref{eq:asimplifier2}). In other words, the results from Lemma \ref{lem:simpl1} to Lemma \ref{lem:lescontributionsnonnulles} imply that
\begin{equation}
\label{eq:BcongrA}
B_{n+1,k-1} - B_{n+1,n-k} = A_{n+1,k-1} - A_{n+1,n-k} + x(A_{n,k-1}-A_{n,n-k})
\end{equation}
for all $k \in [n]$. Now, at this step we know that Formula \hyperref[eq:A]{$(13_{n,k})$} and Formula \hyperref[eq:A]{$(13_{n+1,k})$} are true, which implies that
\begin{align*}
A_{n,k-1} - A_{n,n-k} &= -\sum_{i=1}^{k-1} K_{n-1,i} + \sum_{i=1}^{n-k} K_{n-1,i},\\
A_{n+1,k-1} - A_{n+1,n-k} &= -\sum_{i=1}^{k-1} K_{n,i} + \sum_{i=1}^{n-k} K_{n,i},
\end{align*}
so Formula (\ref{eq:BcongrA}) implies Formula \hyperref[eq:B]{$(14_{n+1,k})$} for all $k \in [n]$ in view of Formula (\ref{eq:inductionK}) and Formula (\ref{eq:symetrieK}). This ends the proof of Theorem \ref{theo:mastertheo}.

\section{Open problems}
\label{sec:open}

Conjecture \ref{conjecture} proved by Theorem \ref{theo:mastertheo} is a particular case of the following conjecture, as we explain afterwards.

\begin{conjecture}[Lando,2016]
\label{conj2}
The generating function $\sum_{t \geq 0} D_n(x) t^n$ has the continued fraction expansion
\begin{equation*}
\label{eq:continuedfraction}
\dfrac{1}{1-b_0(x)t - \dfrac{\lambda_1(x) t^2}{1 - b_1(x) t - \dfrac{\lambda_2(x)t^2}{\ddots}}}
\end{equation*}
where $b_k(x) = x - k(k+1)$ and $\lambda_k(x) = -k^2 x + \binom{k}{2} \binom{k+1}{2}$.
\end{conjecture}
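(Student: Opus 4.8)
The plan is to recognize the statement as a Jacobi-type continued fraction and to prove it within the classical correspondence (Flajolet, Viennot) between J-fractions, orthogonal polynomials, and weighted Motzkin paths. Writing $F(x,t) = \sum_{n \ge 0} D_n(x)\, t^n$ (with $D_0 = 1$), the claim is equivalent to the assertion that $D_n(x)$ is the $n$-th moment of the orthogonal polynomial sequence $(P_k)_{k \ge 0}$ in an auxiliary variable $y$ defined by $P_{-1} = 0$, $P_0 = 1$ and
\[
P_{k+1}(y) = \bigl(y - b_k(x)\bigr) P_k(y) - \lambda_k(x)\, P_{k-1}(y),
\]
equivalently that
\[
D_n(x) = \sum_{P}\ \prod_{\text{level step at height } k} b_k(x)\ \prod_{\text{fall from height } k} \lambda_k(x),
\]
the sum running over Motzkin paths $P$ of length $n$ that stay weakly above the horizontal axis. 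I would begin by checking the first weighted path sums against this weighting: those of lengths $2$ and $3$ give $b_0^2 + \lambda_1 = x^2 - x$ and $b_0^3 + 2b_0\lambda_1 + b_1\lambda_1 = x^3 - 3x^2 + 2x$, matching $D_2$ and $D_3$ and fixing the normalization $D_0 = 1$.

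The engine of the argument should be the crossing-number--graded families $A_{n,k}$ and $B_{n,k}$ already constructed for Theorem \ref{theo:mastertheo}, whose second index $k$ (the number of chords crossed by a distinguished chord) is the natural candidate for the Motzkin height. The idea is to read the transition $n \mapsto n+1$ as a single linear recurrence in the height index. Lemma \ref{lem:expands} and Lemma \ref{lem:expansionBintermsofA}, combined with the $T/R$ bookkeeping of Formula (\ref{eq:relationTR}), already express each $A_{n+1,\ell}$ and $B_{n+1,\ell}$ as an integer-shifted multiple $(x-c)\,A_{n,\ell}$ plus explicit sums of lower-order $\Delta$-terms; assembling these over all $\ell$ should yield a matrix identity $V_{n+1} = M(x)\, V_n$, where $V_n$ collects the $A_{n,k}$ and $M(x)$ is a production matrix with polynomial entries. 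Since $D_n = A_{n,n-1}$, tracking this entry would already exhibit $D_n$ as a moment-type quantity attached to $M(x)$.

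The crux, and the step I expect to carry essentially all the difficulty, is to bring $M(x)$ into the tridiagonal (Jacobi) shape dictated by the conjecture. The crossing-number grading is not literally the Motzkin height: the correction terms $R_{a,b,c,d}$ couple several levels simultaneously, so $M(x)$ is a priori far from tridiagonal. I would search for an explicit upper-unitriangular change of basis $W_n = U\, V_n$, with entries polynomial in $x$, after which the conjugated production matrix $U M(x) U^{-1}$ collapses to nearest-neighbour form with diagonal $b_k(x) = x - k(k+1)$, superdiagonal $1$, and subdiagonal $\lambda_k(x) = -k^2 x + \binom{k}{2}\binom{k+1}{2}$. Reproducing these precise coefficients is the hard part: their affine-in-$x$, quadratic-in-$k$ shape signals that the correct statistics are a pair of nesting/crossing counts on the chords, so I expect the cleanest verification to come from a decorated Motzkin-path model (level steps carrying the $k(k+1)$ defect, falls carrying $\lambda_k$) rather than from a direct collapse of the long-range $R$-sums.

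Two cross-checks would guide and validate this computation. First, the statement must refine the known continued fractions of the (median) Genocchi numbers underlying Conjecture \ref{conjecture}: the linear-in-$x$ coefficient $-k^2$ of $\lambda_k$ is exactly the signature of a Genocchi-type fraction, which both supplies strong a priori evidence and localizes any sign or indexing error in the low-degree part of $D_n$. Second, the whole statement is equivalent to the family of Hankel-determinant evaluations
\[
\det\bigl(D_{i+j}(x)\bigr)_{0 \le i,j \le n} = \prod_{k=1}^{n} \lambda_k(x)^{\,n+1-k},
\]
together with the shifted determinants $\det(D_{i+j+1}(x))$ that pin down the $b_k$. These polynomial identities in $x$ admit an LU-type proof through the orthogonal polynomials $P_k$ above; I would treat them as an independent verification device rather than the main line, since evaluating such $x$-dependent Hankel determinants directly looks harder than exhibiting the tridiagonal production matrix.
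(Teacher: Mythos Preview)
The paper does \emph{not} prove this statement: Conjecture~\ref{conj2} is presented in Section~\ref{sec:open} as an open problem, and the surrounding discussion only explains why Conjecture~\ref{conjecture} (which \emph{is} proved, via Theorem~\ref{theo:mastertheo}) follows as the $\bmod\ x^2$ specialization of Conjecture~\ref{conj2}. So there is no ``paper's own proof'' to compare against; you are attempting something the author leaves open.

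As for your plan itself, it is a reasonable outline but not a proof, and the gap is exactly where you say the difficulty lies. Setting up a vector $V_n$ of the $A_{n,k}$ (or $B_{n,k}$) and hoping for a production relation $V_{n+1} = M(x) V_n$ already runs into trouble: the recursions in Lemmas~\ref{lem:expands} and~\ref{lem:expansionBintermsofA} express $A_{n+1,\ell}$ not as a fixed linear combination of the $A_{n,k}$ but through sums of $\Delta_{i,j}$-terms whose combinatorial type varies with $n$ and $\ell$, so there is no single $n$-independent matrix $M(x)$ to conjugate. Even granting some stabilized matrix, you give no candidate for the change of basis $U$ and no mechanism by which the long-range $R$-couplings would cancel to leave precisely $b_k(x) = x - k(k+1)$ and $\lambda_k(x) = -k^2 x + \binom{k}{2}\binom{k+1}{2}$; ``I would search for'' is not an argument. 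The Hankel-determinant reformulation and the Genocchi cross-check are genuine consistency tests, but they are equivalent restatements of the conjecture, not steps toward a proof. In short, your proposal correctly identifies the J-fraction/production-matrix framework and passes the sanity checks for $n\le 3$, but it does not supply the key idea that would settle the conjecture, and the paper offers none either.
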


Following Flajolet's theory of continued fractions~\cite{Flajolet}, recall that a Motzkin path of length $n \geq 0$ is a tuple $(p_0,\hdots,p_n) \in ([0,n] \times [0,n])^n$ such that $p_0~=~(0,0)$, $p_n = (n,0)$ and $\overrightarrow{p_{i-1} p_{i}}$ equals either $(1,1)$ (we then say it is an \textit{up step}), or $(1,0)$ (an \textit{horizontal step}), or $(1,-1)$ (a \textit{down step}), for all $i \in [n]$. Conjecture (\ref{conj2}) is equivalent to
\begin{equation*}
D_n(x) = \sum_{\gamma \in M_n} \omega_{b_{\bullet}(x),\lambda_{\bullet}(x)}(\gamma)
\end{equation*}
for all $n \geq 0$, where $\omega_{b_{\bullet}(x),\lambda_{\bullet}(x)}(\gamma)$ is the product of the weigths of the steps of $\gamma~\in M_n$, where an up step is weighted by $1$, an horizontal step from $(x,y)$ to $(x+1,y)$ by $b_y(x)$, and a down step from $(x,y)$ to $(x+1,y-1)$ by $\lambda_y(x)$.

Now, for all $n \geq 2$, if $M'_n$ is the subset of the paths $\gamma = (p_0,\hdots,p_n) \in M_n$ whose only points $p_i = (x_i,y_i)$ such that $y_i = 0$ are $p_0$ and $p_n$, then it is clear that
\begin{align*}
\sum_{\gamma \in M_n} \omega_{b_{\bullet}(x),\lambda_{\bullet}(x)}(\gamma) &\equiv \sum_{\gamma \in M'_n} \omega_{b_{\bullet}(x),\lambda_{\bullet}(x)}(\gamma) \mod x^2,\\
& = -x \sum_{\gamma \in M_{n-2}} \omega_{b'_{\bullet}(x),\lambda'_{\bullet}(x)}(\gamma),\\
& \equiv -x \sum_{\gamma \in M_{n-2}} \omega_{\beta_{\bullet},\Lambda_{\bullet}}(\gamma) \mod x^2
\end{align*}
where
\begin{align*}
b'_k(x) &= b_{k+1}(x) \equiv \beta_k = -(k+1)(k+2) \mod x,\\
\lambda'_k(x) &= \lambda_{k+1}(x) \equiv \Lambda_k  = \binom{k+1}{2} \binom{k+2}{2} \mod x.
\end{align*}
Conjecture \ref{conjecture} is then a particular case of Conjecture \ref{conj2} in that
$$\sum_{n\geq 0}(-1)^n h_{n+1} t^n = \dfrac{1}{1-\beta_0 t-\dfrac{\Lambda_1 t^2}{1-\beta_1 t - \dfrac{\Lambda_2 t^2}{\ddots}}},$$
which we can obtain by applying Lemma \ref{lem:transf} hereafter on the following formula~(see~\cite{HZ,Feigin2})~:
$$\sum_{n\geq 0}(-1)^n h_{n} t^n = \dfrac{1}{1-\dfrac{-\binom{2}{2} t}{1- \dfrac{-\binom{2}{2} t}{1-\dfrac{-\binom{3}{2} t}{1- \dfrac{-\binom{3}{2}t}{1-\dfrac{-\binom{4}{2}t}{\ddots}}}}}}.$$
\begin{lemma}[Dumont and Zeng~\cite{DZ}]
\label{lem:transf}
Let $(c_n)_{n\geq 0}$ be a sequence of complex numbers, then
$$\dfrac{c_0}{1-\dfrac{c_1 t}{1- \dfrac{c_2 t}{\ddots}}} = c_0 + \dfrac{c_0c_1t}{1-(c_1+c_2)t-\dfrac{c_2c_3 t^2}{1- (c_3+c_4)t-\dfrac{c_4 c_5 t^2}{\ddots}}}.$$
\end{lemma}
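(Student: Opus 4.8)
The plan is to recognize the claimed identity as the classical \emph{even contraction} of a Stieltjes-type continued fraction into a Jacobi-type one, and to prove it entirely within the ring of formal power series $\mathbb{C}[c_0,c_1,\dots][[t]]$, treating the $c_n$ as indeterminates (the numerical statement then follows by specialization). The first step is to give each side a rigorous meaning: every continued fraction appearing here has all its partial denominators of the form $1-(\cdots)t-(\cdots)t^2$, so each truncation is an invertible power series with constant term $1$, and the coefficient of $t^N$ in the whole fraction is already determined by finitely many levels. Hence both sides are well-defined formal power series and it suffices to prove the equality there.

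Next I would introduce the tails of both fractions. For $k\geq 1$ set
\[
u_k = \cfrac{1}{1 - \cfrac{c_k t}{1 - \cfrac{c_{k+1}t}{\ddots}}},
\]
so that $u_k = \dfrac{1}{1 - c_k t\, u_{k+1}}$, equivalently $u_k = 1 + c_k t\, u_k u_{k+1}$, and the left-hand side of the lemma equals $c_0 u_1$. For $k\geq 0$ set
\[
v_k = \cfrac{1}{1 - (c_{2k+1}+c_{2k+2})t - \cfrac{c_{2k+2}c_{2k+3}t^2}{1 - (c_{2k+3}+c_{2k+4})t - \ddots}},
\]
so that $v_k = \dfrac{1}{1 - (c_{2k+1}+c_{2k+2})t - c_{2k+2}c_{2k+3}t^2\, v_{k+1}}$, and the right-hand side equals $c_0 + c_0 c_1 t\, v_0$. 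With this notation the lemma reduces to the single identity $u_1 = 1 + c_1 t\, v_0$, after which multiplying by $c_0$ finishes it.

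The heart of the argument is the product formula
\[
v_k = u_{2k+1}\, u_{2k+2} \qquad (k\geq 0),
\]
which I would prove by checking that $w_k := u_{2k+1}u_{2k+2}$ obeys the \emph{same} recurrence as $v_k$. Indeed, from $u_{2k+1}^{-1} = 1 - c_{2k+1}t\,u_{2k+2}$ and $u_{2k+2}^{-1} = 1 - c_{2k+2}t\,u_{2k+3}$ one computes $w_k^{-1} = 1 - c_{2k+1}t - c_{2k+2}t\,u_{2k+3}$; substituting $u_{2k+3} = 1 + c_{2k+3}t\,w_{k+1}$ turns this into $w_k^{-1} = 1 - (c_{2k+1}+c_{2k+2})t - c_{2k+2}c_{2k+3}t^2\,w_{k+1}$, which is exactly the recurrence defining $v_k$. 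Since $(v_k)$ and $(w_k)$ both have constant term $1$ and satisfy the same contraction recurrence, they coincide. Taking $k=0$ and using $u_1 = 1 + c_1 t\, u_1 u_2 = 1 + c_1 t\, v_0$ then gives the required identity.

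I expect the only delicate point to be \emph{foundational} rather than computational: making the passage ``same recurrence $+$ same constant term $\Rightarrow$ equality'' rigorous for these infinite fractions. This is handled by working in the formal power series ring and observing that each recurrence determines the coefficient of $t^N$ from coefficients of strictly lower degree, because of the explicit factor $t$ (resp.\ $t^2$) multiplying the tail; thus the solution with prescribed constant term $1$ is unique, by induction on the $t$-adic order. Everything else is a two-line manipulation of the defining relations, so once the formal setting is fixed no genuine obstacle remains.
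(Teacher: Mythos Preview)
Your argument is correct: it is exactly the classical even contraction of an S-fraction into a J-fraction, and the key computation $w_k^{-1}=u_{2k+1}^{-1}u_{2k+2}^{-1}=1-c_{2k+1}t-c_{2k+2}t\,u_{2k+3}$ followed by $u_{2k+3}=1+c_{2k+3}t\,w_{k+1}$ is clean and right. The uniqueness step is also sound once you phrase it as you do, namely that the $t^2$ factor in the recurrence forces $v_k\equiv w_k\pmod{t^{N}}$ for all $k$ to imply the same congruence modulo $t^{N+2}$.

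As for the comparison: the paper does \emph{not} prove this lemma at all. It is quoted as a result of Dumont and Zeng~\cite{DZ} and used as a black box to pass from the S-fraction for $\sum(-1)^n h_n t^n$ to the J-fraction with parameters $\beta_k,\Lambda_k$. So your write-up actually supplies a self-contained proof where the paper only gives a citation; your approach coincides with the standard contraction argument that Dumont and Zeng (and many others) use, so there is no divergence in method to discuss.
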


Another ambitious problem would be to extend the combinatorial interpretations provided by Theorem \ref{theo:mastertheo} to any chord diagram.

\section*{Acknowledgements}
I thank Evgeny Feigin, Sergei Lando and Jiang Zeng for their valuable information and suggestions.


\begin{thebibliography}{9}

\bibitem{BD}
D. Barsky and D. Dumont. “Congruences pour les nombres de Genocchi de 2e espèce (French)”. In: \textit{Study Group on Ultrametric
Analysis} 34 (7th–8th years: 1979–1981), pp. 112–129.

\bibitem{Bigeni} 
A. Bigeni. \textit{Combinatorial interpretations of the Kreweras triangle in terms of subset tuples}. 2017. eprint: \href{https://arxiv.org/abs/1712.01929}{arXiv:1712.01929}.

\bibitem{CDM}
S. Chmutov, S. Duzhin and J. Mostovoy. \textit{Introduction to Vassiliev knot invariants}. Cambridge University Press, 2012.

\bibitem{CV}
S. Chmutov and A. Varchenko. “Remarks on the Vassiliev knot invariants coming from $\text{sl}_2$”. 
Topology 36 (1997), no. 1, 153--178. 

\bibitem{Dumont}
D. Dumont. “Interprétations combinatoires des nombres de Genocchi”. In: \textit{Duke Math}. J. 41 (1974), pp. 305–318.

\bibitem{DV}
D. Dumont and G. Viennot. “A combinatorial interpretation of the Seidel generation of Genocchi numbers”. In: \textit{Ann. Discrete
Math}. 6 (1980), pp. 77–87.

\bibitem{DZ}
D. Dumont and J. Zeng. “Further results on the Euler and Genocchi numbers”. In: \textit{Aequationes Math}. 47.1 (1994), pp. 31–42.

\bibitem{Feigin}
E. Feigin. “Degenerate flag varieties and the median Genocchi numbers”. In: \textit{Math. Res. Lett.} 18.6 (2011), pp. 1163–1178.

\bibitem{Feigin2}
E. Feigin. “The median Genocchi numbers, $q$-analogues and continued fractions”. In: \textit{European J. Combin.} 33.8 (2012), pp. 1913–1918.

\bibitem{Flajolet}
P. Flajolet. “Combinatorial aspects of continued fractions”. In: \textit{Discrete Math}. 32.2 (1980), pp. 125–161.

\bibitem{HZ}
G.-N. Han and J. Zeng. “On a q-sequence that generalizes the median Genocchi numbers”. In: \textit{Ann. Sci. Math. Québec} 23.1 (1999),
pp. 63–72.

\bibitem{Kreweras}
G. Kreweras. “Sur les permutations comptées par les nombres de Genocchi de première et deuxième espèce (French)”. In: \textit{European
J. Combin.} 18.1 (1997), pp. 49–58.

\bibitem{KB}
G. Kreweras and J. Barraud. “Anagrammes alternés”. In: \textit{European J. Combin.} 18.8 (1997), pp. 887–891.

\bibitem{LZ}
S. K. Lando and A. K. Zvonkin. \textit{Graphs on surfaces and their applications}. Springer, 2004. Chap. 6.

\bibitem{G}
OEIS Foundation Inc., The On-Line Encyclopedia of Integer Sequences, \url{http://oeis.org/A110501} (2011).

\bibitem{H}
OEIS Foundation Inc., The On-Line Encyclopedia of Integer Sequences, \url{http://oeis.org/A005439} (2011).

\bibitem{h}
OEIS Foundation Inc., The On-Line Encyclopedia of Integer Sequences, \url{http://oeis.org/A000366} (2011).

\end{thebibliography}
\end{document}